\newtheorem{theorem}{Theorem}[section]
\newtheorem{lemma}[theorem]{Lemma}
\newtheorem{definition}[theorem]{Definition}
\theoremstyle{remark}
\newtheorem{claim}{Claim}
\begin{document}

\newcounter{my}
\newenvironment{mylabel}
{
\begin{list}{(\roman{my})}{
\setlength{\parsep}{-1mm}
\setlength{\labelwidth}{8mm}
\usecounter{my}}
}{\end{list}}

\newcounter{my2}
\newenvironment{mylabel2}
{
\begin{list}{(\alph{my2})}{
\setlength{\parsep}{-0mm} \setlength{\labelwidth}{8mm}
\setlength{\leftmargin}{3mm}
\usecounter{my2}}
}{\end{list}}

\newcounter{my3}
\newenvironment{mylabel3}
{
\begin{list}{(\alph{my3})}{
\setlength{\parsep}{-1mm}
\setlength{\labelwidth}{8mm}
\setlength{\leftmargin}{14mm}
\usecounter{my3}}
}{\end{list}}

\title{\bf On the vertex cover number of $3-$uniform hypergraph}


 \author{Zhuo Diao ${}^{a}$\thanks{Corresponding author. E-mail: diaozhuo@amss.ac.cn} 
}
\date{
 ${}^a$ School of Statistics and Mathematics, Central University of Finance and Economics
Beijing 100081, China\\
} 



\maketitle

\begin{abstract}
Given a hypergraph $H(V,E)$, a set of vertices $S\subseteq V$ is a vertex cover if every edge has at least a vertex in $S$.
The vertex cover number is the minimum cardinality of a vertex cover, denoted by $\tau(H)$. In this paper,
we prove that for every $3-$uniform connected hypergraph $H(V,E)$, $\tau(H)\leq \frac{2m+1}{3}$ holds on where $m$ is the number of edges.
Furthermore, the equality holds on if and only if $H(V,E)$ is a hypertree with perfect matching.

\end{abstract}

\noindent{\bf Keywords}: {$3-$uniform hypergraph,vertex cover,hypertree,perfect matching}

\section{Introduction}

Given a hypergraph $H(V,E)$, a set of vertices $S\subseteq V$ is a vertex cover if every edge has at least a vertex in $S$.
The vertex cover number is the minimum cardinality of a vertex cover, denoted by $\tau(H)$. The vertex covering number is a key parameter in hypergraph theory. There are numerous literatures about this problem~\cite{berge1989}\cite{DSW1994}\cite{DK2016}\cite{OM2005}. A set of edges $A\subseteq E$ which are pairwise disjoint is a matching and the matching number is the maximum cardinality of a matching, denoted by $\nu(H)$. There are also numerous literatures about this problem~\cite{AKS1997}\cite{AK1999}. It is clear that $\nu(H)\leq \tau(H)$ always holds. If a hypergraph $H$ satisfies $\nu(H)= \tau(H)$, then we say Konig Property holds in $H$. In chapter $5$ of Hypergraphs~\cite{berge1989}, berge gives many conditions about Konig Property. He has proven that if a hypergraph $H$ has no odd cycle, then Konig Property holds in $H$, which says $\nu(H)= \tau(H)$ holds.


Packing and covering are so important because they are prime-dual parameters. Specially, an important category of packing and covering is the vertex cover and matching. In fact, for every hypergraph $H(V,E)$, it is easy to construct two integral programmes whose optimal values are $\tau(H)$ and $\nu(H)$. Furthermore, by relaxing integral constraints to linear constraints, there are two prime-dual programmes whose optimal values are $\tau^{\ast}(H)$ and $\nu^{\ast}(H)$. So $\tau(H) \ge \tau^{\ast}(H)  = \nu^{\ast}(H)  \ge \nu(H)$ holds on. $\tau(H)$ and $\nu(H)$ are prime-dual parameters.

Our contribution:

In this paper, we prove that for every $3-$uniform connected hypergraph $H(V,E)$, $\tau(H)\leq \frac{2m+1}{3}$ holds on where $m$ is the number of edges.
Furthermore, the equality holds on if and only if $H(V,E)$ is a hypertree with perfect matching.



\section{Hypergraphs}\label{sec:hypergraph}

In this section, we introduce the basic conceptions in hypergraph theory.\\

Let $H=(V,E)$ be a hypergraph with vertex set $V$ and edge set $E$. As in graph theory, we denote $n=|V|$ and $m=|E|$. For each $v\in V$, the {\em degree} $d(v)$ is the number of edges containing $v$. Let $k\in\mathbb Z_{>0}$ be a positive integer, hypergraph $H$ is called  {\em $k$-regular}  if $d(v)=k$ for each $v\in V$, and   {\em $k$-uniform} if $|e|=k$ for each $e\in E$. Hypergraph $H$ is   {\em linear} if $|e\cap f|\le1$ for any pair of distinct edges $e,f\in E$.\\

A vertex-edge alternating sequence $ v_{1}e_{1}v_{2}...v_{k}e_{k}v_{k+1}$ of $H$ is called a {\em path} (of {\em length} $k$) between $v_{1}$ and $v_{k+1}$ if $v_{1}, v_{2},..., v_{k+1}\in V$ are distinct, $ e_{1}, e_{2},..., e_{k}\in E$ are distinct, and  $\{v_{i},v_{i+1}\}\subseteq e_{i}$ for each $i\in [k]=\{1,\ldots,k\}$. Hypergraph $H$ is said to be {\em connected} if  there is a path between any pair of distinct vertices in $H$. A maximal connected subgraph of $H$  is called a {\em component} of $H$. \\

  A vertex-edge alternating sequence   $C= v_{1}e_{1}v_{2}e_{2}...v_{k}e_{k}v_{1}$, where $k\ge2$, is called a {\em cycle} (of length $k$) if $v_{1}, v_{2},..., v_{k}\in V$ are distinct, $ e_{1}, e_{2},..., e_{k}\in E$ are distinct, and  $\{v_{i},v_{i+1}\}\subseteq e_{i}$ for each $i\in [k]$, where $v_{k+1}=v_{1}$. {We call vertices $v_1,v_2,\ldots,v_k$  {\em join vertices} of $C$, and the other vertices  {\em non-join {vertices}} of $C$.}Hypergraph $H$ is said to be a {\em hyperforest} if $H$ is acyclic. Hypergraph $H$ is said to be a {\em hypertree} if $H$ is connected and acyclic.  \\

  For any $S\subseteq V$, we write $H\setminus S$  for the  subgraph of $H$ obtained from $H$ by deleting all vertices in $S$ and all edges  incident with some vertices in $S$.  For any $A\subseteq E$, we write $H\setminus A$  for the  subgraph of $H$ obtained from $H$ by deleting all edges in $A$ and keeping vertices.
  For any $S\subseteq V$, we write $H[S]$ for the subgraph of $H$ induced by the vertex set $S$.  For any $A\subseteq E$, we write $H[A]$ for the subgraph of $H$ induced by the edge set $A$.\\

Given a hypergraph $H(V,E)$, a set of vertices $S\subseteq V$ is a vertex cover if every edge has at least a vertex in $S$ which means $H\setminus S$ has no edges. The vertex cover number is the minimum cardinality of a vertex cover, denoted by $\tau(H)$. A set of edges $A\subseteq E$ is a matching if every two distinct edges have no common vertex. The matching number is the maximum cardinality of a matching, denoted by $\nu(H)$.

\section{The vertex cover number of $3-$uniform hypergraph}\label{sec:vcmaintheorem}

In this section, we will prove our main theorem as following:

\begin{theorem}
For every $3-$uniform connected hypergraph $H(V,E)$, $\tau(H)\leq \frac{2m+1}{3}$ holds on.
Furthermore, the equality holds on if and only if $H(V,E)$ is a hypertree with perfect matching.
\end{theorem}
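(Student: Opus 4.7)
The plan is to proceed by strong induction on $m$. For the base case $m=1$, a single $3$-edge has $\tau=1=(2\cdot 1+1)/3$, and a single edge is a hypertree carrying itself as a perfect matching, so both directions of the equivalence hold.

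For the inductive step ($m\geq 2$), the first step will be to locate a \emph{pendant edge}, i.e.\ an edge $e=\{u,v,w\}$ with $d(u)=d(v)=1$ and $d(w)=d\geq 2$. When one exists, place $w$ into the cover so that all $d$ edges of $E(w)$ are absorbed simultaneously. The crucial observation is that $u$ and $v$ become isolated in $H\setminus E(w)$, so each non-trivial connected component of $H\setminus E(w)$ must contain a vertex from the set $\bigcup_{f\in E(w)\setminus\{e\}}(f\setminus\{w\})$, which has size at most $2(d-1)$. Hence the number $c$ of such components satisfies $c\leq 2(d-1)$, and applying the inductive hypothesis to each component $H_i$ with $m_i$ edges yields
\begin{equation*}
\tau(H)\leq 1+\sum_{i=1}^{c}\frac{2m_i+1}{3}=\frac{2m+3-2d+c}{3}\leq\frac{2m+1}{3}.
\end{equation*}
A standard degree count shows that every hypertree with $m\geq 2$ edges contains at least two pendant edges: from $n=2m+1$ and $\sum_v d(v)=3m$, at least $m+2$ vertices of degree one exist, forcing two of them into a common edge.

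The main obstacle will be the case in which $H$ has no pendant edge, i.e.\ every edge has at most one degree-one vertex. A degree-sum count then gives $n\leq 2m$, so $H$ must contain a cycle and is therefore not a hypertree; the target here is strict inequality. The plan is to show that the same pivoting still works for a suitable vertex $v$ of degree $\geq 2$, using that the degree-one boundary vertices of $E(v)$ are already isolated in $H\setminus E(v)$ and hence do not anchor additional non-trivial components. If this cannot be arranged directly, the fallback is to pick a short cycle of $H$ and include the common vertex of two adjacent cycle edges in the cover to absorb both edges at once, then recurse on the remaining (possibly disconnected) hypergraph via the strengthened inductive hypothesis $\tau(H)\leq(2m+c(H))/3$, where $c(H)$ counts non-trivial components. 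Confirming that one of these two reductions is always available is the technical heart of the proof.

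For the equality characterization, tracing the inequalities above forces equality to occur only in the pendant-edge branch, with $c=2(d-1)$ and each $H_i$ a hypertree with perfect matching. Combining the matchings of the $H_i$ with the pendant edge $e$ (whose degree-one vertices $u,v$ belong to no $H_i$) produces a perfect matching of $H$, while the acyclicity of the $H_i$ together with the pendant attachment of $e$ ensures $H$ is itself a hypertree; this closes the ``only if'' direction. The ``if'' direction is immediate, since for any hypertree with perfect matching, $\tau(H)\geq\nu(H)=n/3=(2m+1)/3$, which combined with the upper bound just established yields equality.
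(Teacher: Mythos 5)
Your pendant-edge reduction is correct and, for that branch, cleaner than the paper's argument: if $e=\{u,v,w\}$ has $d(u)=d(v)=1$ and $d(w)=d$, then indeed every non-trivial component of $H\setminus E(w)$ must contain one of the at most $2(d-1)$ vertices of $\bigcup_{f\in E(w)\setminus\{e\}}(f\setminus\{w\})$, and the arithmetic $\tau(H)\leq \frac{2m+3-2d+c}{3}\leq\frac{2m+1}{3}$ closes. Your degree count also correctly shows every hypertree with $m\geq 2$ edges has a pendant edge, so the acyclic case is fully handled.

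The genuine gap is the case you yourself flag as ``the technical heart'': when no pendant edge exists. Neither fallback you sketch is carried out, and the second one fails as stated. If you pivot on a vertex $v_2$ of a cycle $C=v_1e_1v_2e_2\ldots$, the cycle only guarantees that $v_1$ and $v_3$ remain connected in $H\setminus v_2$, which merges \emph{one} pair of the $2d(v_2)$ potential component-anchors; this yields $c\leq 2d(v_2)-1$, whereas your arithmetic requires $c\leq 2d(v_2)-2$, and the resulting bound is $\frac{2m+2}{3}$, not $\frac{2m+1}{3}$. Getting the extra saved component is exactly what the paper's Lemma~\ref{lem:2d2} is for: it shows that a vertex with $H\setminus v$ having at most $2d(v)-2$ components exists precisely when two distinct cycles share a vertex, and the complementary situation (all cycles pairwise vertex-disjoint) is then attacked via an auxiliary tree whose nodes are the cycles and the connecting hypertrees, followed by a leaf analysis. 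Your first fallback (exploiting degree-one boundary vertices of $E(v)$) also cannot be made uniform, since a hypergraph with minimum degree $2$ has no such vertices at all. So the hardest configurations --- e.g.\ vertex-disjoint cycles linked by hypertrees, where every edge has at most one degree-one vertex --- are not covered by your argument. The equality characterization inherits the same gap: ruling out $\tau(H)=\frac{2m+1}{3}$ for cyclic $H$ requires the strict inequality in precisely the branch you have not completed, and additionally your ``tracing the inequalities'' step should be made explicit, since $\tau(H)\leq 1+\sum_i\tau(H_i)$ need not be tight and equality must be propagated through every link of the chain.
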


In subsection $1$, we will prove for every $3-$uniform connected hypergraph $H(V,E)$, $\tau(H)\leq \frac{2m+1}{3}$ holds on. And in subsection $2$, we will prove the equality holds on if and only if $H(V,E)$ is a hypertree with perfect matching.

\subsection{General bounds}\label{sec:vcgeneralbounds}

In this subsection, we will prove the theorem as following:

\begin{theorem}
For every $3-$uniform connected hypergraph $H(V,E)$, $\tau(H)\leq \frac{2m+1}{3}$ holds on.
\end{theorem}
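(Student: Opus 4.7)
The plan is a strong induction on the number of edges $m$. The base case $m=1$ is immediate: a single $3$-edge is covered by any one of its vertices, so $\tau(H)=1=(2\cdot 1+1)/3$.

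For the inductive step with $m\ge 2$, the goal is to locate a vertex $v$ with $d(v)\ge 2$ such that the non-trivial connected components $H_1,\dots,H_k$ of $H\setminus\{v\}$ (those containing at least one edge) satisfy $k\le 2d(v)-2$. Given such a $v$, each $H_i$ is a connected $3$-uniform hypergraph with $m_i<m$ edges and $\sum_i m_i = m-d(v)$, so applying the inductive hypothesis to each $H_i$ and including $v$ in the cover yields
$$\tau(H)\ \le\ 1+\sum_{i=1}^{k}\tau(H_i)\ \le\ 1+\sum_{i=1}^{k}\frac{2m_i+1}{3}\ =\ 1+\frac{2(m-d(v))+k}{3}\ \le\ \frac{2m+1}{3}.$$

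The remaining work is to exhibit such a vertex $v$, which I do by case analysis. \emph{Case A:} some edge $e=\{a,b,c\}$ has two vertices of degree $1$, say $d(a)=d(b)=1$. Since $m\ge 2$ forces $d(c)\ge 2$, take $v=c$: in $H\setminus\{c\}$ the vertices $a,b$ are isolated and contribute to no non-trivial component, while the remaining $d(c)-1$ edges through $c$ supply at most $2(d(c)-1)$ attachment vertices, yielding $k\le 2d(c)-2$. \emph{Case B:} every edge of $H$ has at most one degree-$1$ vertex. A degree-sum argument shows that a hypertree on $m\ge 1$ edges has $n=2m+1$ vertices and $\sum_v d(v)=3m$, forcing at least $m+2$ vertices of degree $1$; however, ``at most one per edge'' caps their count at $m$, contradiction. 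Hence $H$ contains a cycle. I then pick a shortest cycle $C$ and choose $v$ on $C$: two of $v$'s incident edges lie on $C$ and force their other endpoints into the same component of $H\setminus\{v\}$ through the remainder of $C$, supplying the needed saving, while the extremal degree-$1$ vertices still allowed by Case~B on non-cycle edges through $v$ absorb the rest of the slack.

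The main obstacle will be pushing through the full case analysis in Case~B, in particular when the shortest cycle has length $2$ with both of its join vertices having degree $\ge 3$: a single-vertex removal then gives only $k\le 2d(v)-1$, one attachment too many. The plan there is either to remove a \emph{non-join} vertex of the $2$-cycle (whose removal keeps both join vertices in the same component via the parallel cycle edge, saving two attachments rather than one), or to add both join vertices $u,v$ to the cover simultaneously, trading the extra $+1$ on the cover side for the larger reduction $d(u)+d(v)-\ell$ in edge count (with $\ell\ge 2$ shared edges) and exploiting the enforced coincidences in the attachment neighborhoods.
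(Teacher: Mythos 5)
Your overall framework is sound and matches the paper's: an induction on $m$ reduced to finding a vertex $v$ with $d(v)\ge 2$ such that $H\setminus v$ has at most $2d(v)-2$ edge-containing components; your Case A (an edge with two degree-$1$ vertices) is complete and correct, as is the degree count showing that Case B forces a cycle. The genuine gap is the rest of Case B, which is exactly the hard part of the theorem and is left as a ``plan.'' Deleting a join vertex $v=v_i$ of a shortest cycle only merges the two far join vertices $v_{i-1},v_{i+1}$ into one component; the two non-join vertices $u_{i-1},u_i$ of the cycle edges at $v$ are unconstrained, so in general you only get $k\le 2d(v)-1$, one more than you need. Your claim that degree-$1$ vertices on the non-cycle edges through $v$ ``absorb the rest of the slack'' is unsupported: in Case B nothing forces any edge through $v$ to contain a degree-$1$ vertex (for instance $d(v)=2$ with both $u_{i-1},u_i$ of degree $\ge 2$, each leading into a branch that returns to $H$ only through $v$). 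The two fallbacks you offer for the $2$-cycle sub-case are likewise unverified: removing a non-join vertex $a$ of the $2$-cycle again saves only one attachment (the two join vertices stay together via the parallel edge), giving only $k\le 2d(a)-1$, and it additionally requires $d(a)\ge 2$; and putting both join vertices $u,v$ into the cover changes the induction scheme, where the arithmetic requires the edge-containing components of $H\setminus\{u,v\}$ to number at most $2d(u)+2d(v)-2\ell-5$, a bound you do not establish.

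This is precisely the point where the paper's proof stops trying to find a single good vertex. After its Claim 2 (the contrapositive of your reduction step) it proves a separate lemma showing that two cycles sharing a vertex yield some vertex --- not necessarily on either cycle --- whose deletion leaves at most $2d-2$ components (via a lengthy case analysis of minimal cycles), concludes that in a minimal counterexample all cycles are pairwise vertex-disjoint, and then switches to a different decomposition: partitioning $E$ into two parts each inducing a connected subhypergraph and exploiting the forced equalities $\tau(H_i)=\frac{2m_i+1}{3}$ to rule out pendant trees and pendant edges on a cycle. To close Case B you would need either to reproduce that machinery or to prove directly that under your Case B hypothesis a single good vertex always exists; neither is done, and the latter claim is not obviously true.
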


Before proving the theorem above, we will prove a series of lemmas which are very useful.

\begin{lemma}\label{connectn2m1}
For every $3-$uniform connected hypergraph $H(V,E)$, $n\leq 2m+1$ holds on.
\end{lemma}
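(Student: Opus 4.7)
My plan is to induct on $m$, the number of edges. The base case $m=0$ is immediate: a connected $3$-uniform hypergraph with no edges has exactly one vertex, so $n=1 \le 2\cdot 0 + 1$.

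For the inductive step with $m \ge 1$, the idea is to pick an arbitrary edge $e=\{a,b,c\}\in E$, delete it (keeping all three vertices), and analyze the connected components $C_1,\ldots,C_k$ of $H\setminus\{e\}$, with $n_i=|V(C_i)|$ and $m_i=|E(C_i)|$. The crucial observation is that, because $H$ itself is connected and $e$ is the only edge removed, every component $C_i$ must contain at least one vertex of $e$; otherwise, no edge of $H$ would attach $C_i$ to the rest of the hypergraph. Hence $k\le |e|=3$. Each $C_i$ is a connected $3$-uniform hypergraph with $m_i<m$ edges, so the inductive hypothesis gives $n_i\le 2m_i+1$. Summing and using $\sum_i m_i = m-1$, I obtain
\[ n \;=\; \sum_{i=1}^k n_i \;\le\; 2\sum_{i=1}^k m_i + k \;=\; 2(m-1)+k \;\le\; 2m+1, \]
which closes the induction.

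I do not anticipate a serious obstacle: the only point needing a careful sentence is why every component of $H\setminus\{e\}$ meets $e$, and this follows at once from the connectedness of $H$. It is worth noting, as a signpost for later sections, that the three subcases $k=1,2,3$ yield the progressively sharper bounds $n\le 2m-1$, $n\le 2m$, and $n\le 2m+1$ respectively. Consequently, equality $n=2m+1$ forces $k=3$, i.e.\ the three vertices of $e$ lie in distinct components of $H\setminus\{e\}$, together with equality in each inductive call—an observation that already foreshadows the rigid hypertree structure arising in the equality characterization of the main theorem.
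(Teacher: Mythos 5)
Your proof is correct and follows essentially the same route as the paper's: induction on $m$, deleting an arbitrary edge $e$ and observing that $H\setminus e$ has at most three components, each meeting the inductive bound, so that $n\le 2(m-1)+k\le 2m+1$. Your extra sentence justifying why every component meets $e$, and your remark that equality forces $k=3$, are welcome additions but do not change the argument.
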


\begin{proof}
We prove this lemma by induction on $m$. When $m=0$, $H(V,E)$ is an isolate vertex, $n\leq 2m+1$ holds on.
Assume this lemma holds on for $m\leq k$. When $m=k+1$, take arbitrarily one edge $e$ and consider the subgraph $H\setminus e$.
obviously, $H\setminus e$ has at most three components. Assume $H\setminus e$ has $p$ components $H_{i}(V_{i},E_{i})$ and $n_{i}=|V_{i}|, m_{i}=|E_{i}|$ for each $i\in\{1,...,p\}$. Then by our induction, $n_{i}\leq 2m_{i}+1$ holds on. So we have

\begin{equation}
n=n_{1}+...n_{p}\leq 2m_{1}+...2m_{p}+p=2(m-1)+p=2m+p-2\leq 2m+1
\end{equation}

By induction, we finish our proof.
\end{proof}

\begin{lemma}\label{connectntree2m1}
For every $3-$uniform connected hypergraph $H(V,E)$, $n=2m+1$ if and only if $H$ is a hypertree.
\end{lemma}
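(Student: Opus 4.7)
The plan is to prove both directions by induction on $m$, re-using the inequality chain from the proof of Lemma~\ref{connectn2m1}. For the ``if'' direction, I would argue that in a hypertree $H$, removing any edge $e = \{v_1, v_2, v_3\}$ splits $H$ into \emph{exactly} three components, each itself a hypertree: any two of $v_1, v_2, v_3$ that remained connected in $H \setminus e$ would, together with $e$, yield a cycle in $H$, contradicting acyclicity; and each resulting component is a connected subhypergraph of the acyclic $H$, hence again a hypertree. Summing $n_i = 2m_i+1$ over the three components and using $m = m_1+m_2+m_3+1$ then gives $n = 2(m-1)+3 = 2m+1$, with the base case $m=0$ (a single isolated vertex) trivial.

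For the ``only if'' direction I would induct on $m$ again, the base case $m=0,\, n=1$ being an obvious hypertree. For $m \geq 1$, the key observation is that $n = 2m+1$ forces the chain
\[
n \;=\; \sum_{i=1}^{p} n_i \;\leq\; \sum_{i=1}^{p}(2m_i+1) \;=\; 2(m-1)+p \;\leq\; 2m+1
\]
from the proof of Lemma~\ref{connectn2m1} to be tight for \emph{every} choice of $e \in E$: one obtains $p=3$ and $n_i = 2m_i+1$ for each component $H_i$ of $H \setminus e$. The induction hypothesis then upgrades each $H_i$ to a hypertree.

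To conclude that $H$ itself is a hypertree it remains to rule out cycles. If $H$ contained a cycle $C = v_1 e_1 v_2 e_2 \cdots v_k e_k v_1$, I would choose $e := e_1$ in the argument above; the subpath $v_2 e_2 v_3 \cdots e_k v_1$ lies entirely in $H \setminus e_1$ and keeps the two join vertices $v_1, v_2$ of $e_1$ in a common component, so $H \setminus e_1$ has at most two components, contradicting $p=3$. Hence $H$ is acyclic, and since it is connected by hypothesis it is a hypertree.

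The main obstacle I anticipate is pinning down that $p=3$ must hold for \emph{every} edge $e$, not merely for some fortunate choice; this is exactly what turns the equality case of Lemma~\ref{connectn2m1} into a structural statement strong enough to rule out cycles. Once that is clear, the cycle-to-contradiction step is a one-line observation and the counting in the ``if'' direction is routine.
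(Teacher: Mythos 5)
Your proposal is correct and follows essentially the same route as the paper: the sufficiency direction is the identical induction (removing any edge of a hypertree yields exactly three hypertree components, each contributing $n_i = 2m_i+1$), and your necessity argument reduces to the paper's observation that an edge lying on a cycle leaves at most two components, which forces $n \leq 2m$. The only difference is cosmetic: your induction-and-tightness wrapper for the ``only if'' direction (upgrading each $H_i$ to a hypertree via the induction hypothesis) is superfluous, since the contradiction between $p=3$ and $p\leq 2$ for a cycle edge already finishes the proof directly, exactly as the paper does.
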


\begin{proof}
sufficiency: if $H$ is a hypertree, we prove $n=2m+1$ by induction on $m$. When $m=0$, $H(V,E)$ is an isolate vertex, $n=2m+1$ holds on.
Assume this lemma holds on for $m\leq k$. When $m=k+1$, take arbitrarily one edge $e$ and consider the subgraph $H\setminus e$.
Because $H$ is a hypertree, $H\setminus e$ has exactly three components, denoted by $H_{i}(V_{i},E_{i})$ and $n_{i}=|V_{i}|, m_{i}=|E_{i}|$ for each $i\in\{1,2,3\}$. Because every component is a hypertree, thus by our induction, $n_{i}= 2m_{i}+1$ holds on. So we have

\begin{equation}
n=n_{1}+n_{2}+n_{3}=2m_{1}+2m_{2}+2m_{3}+3=2(m-1)+3=2m+1
\end{equation}

By induction, we finish the sufficiency proof.\\

necessity: We prove by contradiction. If $H$ is not a hypertree, $H$ contain a cycle $C$. Take arbitrarily one edge $e$ in $C$ and consider the subgraph $H\setminus e$. obviously, $H\setminus e$ has at most two components. Assume $H\setminus e$ has $p$ components $H_{i}(V_{i},E_{i})$ and $n_{i}=|V_{i}|, m_{i}=|E_{i}|$ for each $i\in\{1,...,p\}$. Then by lemma~\ref{connectn2m1}, $n_{i}\leq 2m_{i}+1$ holds on. So we have

\begin{equation}
n=n_{1}+...n_{p}\leq 2m_{1}+...2m_{p}+p=2(m-1)+p=2m+p-2\leq 2m< 2m+1
\end{equation}

which is a contradiction with $n=2m+1$. Thus $H$ is a hypertree and we finish our necessity proof.

\end{proof}

In chapter $5$ of Hypergraphs~\cite{berge1989}, berge has proven that if a hypergraph $H$ has no odd cycle, then Konig Property holds in $H$, which says $\nu(H)= \tau(H)$ holds. Thus next theorem is obvious.

\begin{theorem}\label{the:Konig}
$H(V,E)$ is a hypertree, then $\tau(H)=\nu(H)$
\end{theorem}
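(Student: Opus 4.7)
The plan is to derive this theorem as an immediate corollary of Berge's König-type theorem for hypergraphs without odd cycles, which was recalled in the paragraph just before the statement. Note first that the inequality $\nu(H)\le\tau(H)$ is automatic (any vertex cover must hit the edges of a matching in distinct vertices), so the only content is the reverse inequality $\tau(H)\le\nu(H)$ under the hypertree hypothesis.

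My first step is to observe that a hypertree is by definition acyclic, so in particular it contains no cycle of odd length. Berge's theorem (Chapter 5 of \cite{berge1989}) asserts that the König property $\tau(H)=\nu(H)$ holds in every hypergraph without odd cycles. Applying this with $H$ a hypertree gives the claim in one line; this is exactly why the author writes that the statement is ``obvious''.

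If one wanted a self-contained proof that does not appeal to Berge, the natural route is induction on $m$. The base case $m=0$ is trivial. For the inductive step, I would exploit the structural fact already implicit in the proof of Lemma~\ref{connectntree2m1}: a $3$-uniform hypertree with $m\ge 1$ edges can be built up by successively attaching edges that share exactly one vertex with the previous structure, so there exists a pendant edge $e=\{u,w,x\}$ with $\deg(u)=\deg(w)=1$. Deleting $u$ and $w$ (which also removes $e$) yields a hypertree $H'$ with $m-1$ edges; by induction $\tau(H')=\nu(H')$, and one takes a minimum cover of $H'$ together with $x$ as a cover of $H$ and a maximum matching of $H'$ together with $e$ as a matching of $H$. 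The main technical obstacle in this self-contained version is handling the case where the vertex $x$ is already saturated by the optimal matching of $H'$, for which one needs a short exchange argument swapping the edge through $x$ for $e$. Since Berge's theorem is available, I would simply cite it and avoid this detour.
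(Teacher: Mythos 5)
Your proposal matches the paper exactly: the paper derives this theorem in one line from Berge's result that the K\"onig property holds in hypergraphs without odd cycles, noting that a hypertree is acyclic and hence has no odd cycle. The additional self-contained induction you sketch is not in the paper, but your primary argument is precisely the paper's.
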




\begin{definition}\label{def:minimalcycle}
A cycle $C= v_{1}e_{1}v_{2}e_{2}...v_{k}e_{k}v_{1}$, where $k\ge2$, is a minimal cycle if any two non-adjacent edges have no common vertex, that is
for each $i,j\in\{1,...,k\}, 1<|j-i|<k-1$, we have $e_{i}\cap e_{j}=\phi$.
\end{definition}

\begin{lemma}\label{lem:commonvertex}
A cycle $C$ is not minimal, then there exists two distinct cycles $C_{1}$ and $C_{2}$ with $C_{1}\subseteq C, C_{2}\subseteq C, |C_{1}|<|C|,|C_{2}|<|C|,|C_{1}|+|C_{2}|\leq |C|+2$. Furthermore, $C_{1}$ and $C_{2}$ have common vertices(edges).
\end{lemma}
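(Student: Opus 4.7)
The plan is to use a common vertex of $e_i$ and $e_j$ guaranteed by non-minimality as a pivot to split the cyclic walk of $C$ into two shorter closed walks, then extract a cycle from each half. Since $C$ is not minimal, Definition~\ref{def:minimalcycle} yields indices which, up to reversal, we may take to be $i < j$ with $2 \le j - i \le k-2$, together with a vertex $u \in e_i \cap e_j$. I would then form the two closed vertex-edge alternating walks
\[
W_1 : u,\, e_i,\, v_{i+1},\, e_{i+1},\, \ldots,\, v_j,\, e_j,\, u,
\qquad
W_2 : u,\, e_j,\, v_{j+1},\, \ldots,\, v_k,\, e_k,\, v_1,\, e_1,\, \ldots,\, v_i,\, e_i,\, u,
\]
which together use every edge of $C$, with $e_i$ and $e_j$ shared between them. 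Their edge-lengths are $j-i+1$ and $k-(j-i)+1$ respectively, summing to $k+2$.

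Next I would extract a genuine cycle $C_s$ from each closed walk $W_s$ ($s \in \{1,2\}$). The edges of $W_s$ are distinct by construction, and the join vertices $v_1, \ldots, v_k$ of $C$ are pairwise distinct, so the only possible vertex repetition inside $W_s$ is a coincidence $u = v_\ell$ for some single interior join vertex $v_\ell$ of the walk. I would split into the sub-cases $u = v_{i+1}$, $u = v_j$, and $u = v_\ell$ with $i+1 < \ell < j$ (and the symmetric ones for $W_2$), and verify in each case that $W_s$ contains a cycle $C_s$ which has length at least $2$, still contains the vertex $u$, and uses at least one edge of $W_s$ other than $e_i$ and $e_j$.

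With this bookkeeping in hand the conclusions are immediate. The inequality $|C_s| \le |W_s|$ gives $|C_1| + |C_2| \le (j-i+1) + (k-j+i+1) = k+2 = |C| + 2$, and $2 \le j-i \le k-2$ forces $|C_s| \le k-1 < |C|$. The two cycles are distinct because $C_1$ uses some edge in the middle arc $\{e_{i+1}, \ldots, e_{j-1}\}$ whereas every edge of $C_2$ lies in the disjoint complementary arc $\{e_j, e_{j+1}, \ldots, e_k, e_1, \ldots, e_i\}$, and the middle arc is nonempty thanks to $j - i \ge 2$. Both cycles contain the pivot vertex $u$, which supplies the ``common vertices (edges)'' claim.

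The main obstacle is the extraction step above: when $u$ happens to coincide with a boundary join vertex such as $v_{i+1}$ or $v_j$, the naive split at the repetition produces one sub-walk of length one that must be discarded, and I have to check that the surviving sub-walk still has length at least two and uses at least one interior edge. Both points are forced by $j - i \ge 2$, so the required case analysis is short but unavoidable; everything else is counting.
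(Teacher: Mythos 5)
Your proposal is correct and follows essentially the same route as the paper: both pick a vertex $u\in e_i\cap e_j$ guaranteed by non-minimality and use it as a pivot to split $C$ into two shorter sub-cycles through $u$, with a case analysis on whether $u$ is a join vertex of $C$. The only cosmetic difference is that the paper builds the two cycles directly in each of its three cases, whereas you first form the two closed walks of the non-join-vertex case and then trim them when $u$ coincides with some $v_\ell$; the bookkeeping you outline (lengths summing to $k+2$, each piece of length at least $2$ and at most $k-1$ because $2\le j-i\le k-2$, distinctness via the middle arc) is all sound.
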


\begin{proof}
A cycle $C= v_{1}e_{1}v_{2}e_{2}...v_{k}e_{k}v_{1}$ is not minimal, then according to definition~\ref{def:minimalcycle}, there exists $i,j\in\{1,...,k\}, 1<|j-i|<k-1$, we have $e_{i}\cap e_{j}\neq \phi$. Thus we know $k\geq 4$. Assume $i<j$ without generality and take arbitrarily $v\in e_{i}\cap e_{j}$. \\

Case $1$: $v\in \{v_{i},v_{i+1},v_{j},v_{j+1}\}$. Assume $v=v_{i}$ without generality, let us take $C_{1}=ve_{i}v_{i+1}e_{i+1}...v_{j}e_{j}v$ and $C_{2}=ve_{j}v_{j+1}...v_{i-1}e_{i-1}v$, then $C_{1}\subseteq C, C_{2}\subseteq C, |C_{1}|<|C|,|C_{2}|<|C|,|C_{1}|+|C_{2}|= |C|+1$. $C_{1}$ and $C_{2}$ have common vertices(edges).\\

Case $2$: $v\in \{v_{1},...,v_{k}\}-\{v_{i},v_{i+1},v_{j},v_{j+1}\}$. Assume $v=v_{t}, i+1<t<j$ without generality, let us take $C_{1}=ve_{t}v_{t+1}...v_{j}e_{j}v$ and $C_{2}=ve_{j}v_{j+1}...v_{t-1}e_{t-1}v$, then $C_{1}\subseteq C, C_{2}\subseteq C, |C_{1}|<|C|,|C_{2}|<|C|,|C_{1}|+|C_{2}|= |C|+1$. $C_{1}$ and $C_{2}$ have common vertices(edges).\\

Case $3$: $v\not\in \{v_{1},...,v_{k}\}$. Let us take $C_{1}=ve_{i}v_{i+1}e_{i+1}...v_{j}e_{j}v$ and $C_{2}=ve_{j}v_{j+1}...v_{i}e_{i}v$, then $C_{1}\subseteq C, C_{2}\subseteq C, |C_{1}|<|C|,|C_{2}|<|C|,|C_{1}|+|C_{2}|= |C|+2$. $C_{1}$ and $C_{2}$ have common vertices(edges).\\

\end{proof}

\begin{lemma}\label{lem:minimalcycle}
$H(V,E)$ have two distinct cycles $C_{1}$ and $C_{2}$ with common vertices(edges), then $H(V,E)$ have two distinct minimal cycles $C'_{1}$ and $C'_{2}$ with common vertices(edges).
\end{lemma}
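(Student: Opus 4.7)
My plan is to argue by extremality. Consider the collection of all pairs of distinct cycles in $H(V,E)$ sharing a common vertex (a shared edge certainly implies a shared vertex, so this suffices), and pick such a pair $(C_1,C_2)$ that minimizes $|C_1|+|C_2|$. I will show that any such minimum pair must automatically consist of two \emph{minimal} cycles, which yields the desired $(C'_1,C'_2)$.

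To establish this I would suppose for contradiction that some member of the pair, say $C_1$, is not minimal, and then apply Lemma~\ref{lem:commonvertex} to $C_1$ to split it into two distinct shorter cycles $D_1$ and $D_2$ with $D_1,D_2\subseteq C_1$, $|D_1|,|D_2|<|C_1|$, $|D_1|+|D_2|\le|C_1|+2$, and $D_1,D_2$ sharing common vertices. The first key step is the observation that, by direct inspection of the three constructions used in the proof of Lemma~\ref{lem:commonvertex}, the vertex sets $V(D_1)$ and $V(D_2)$ together cover $V(C_1)$. Consequently the common vertex $v^{*}$ of $C_1$ and $C_2$ must lie in $V(D_1)$ or $V(D_2)$, and without loss of generality $v^{*}\in V(D_1)$.

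I then split into two subcases. If $D_1\neq C_2$, then $(D_1,C_2)$ is a pair of distinct cycles sharing the vertex $v^{*}$, with $|D_1|+|C_2|<|C_1|+|C_2|$, contradicting the minimal choice. If instead $D_1=C_2$, then I switch to the pair $(D_2,D_1)=(D_2,C_2)$: these are distinct and share common vertices by Lemma~\ref{lem:commonvertex}, while $|D_2|+|C_2|=|D_2|+|D_1|<|C_1|+|D_1|=|C_1|+|C_2|$, again contradicting minimality. Either way a contradiction arises, forcing $C_1$, and by symmetry $C_2$, to be minimal.

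The main obstacle I anticipate is the boundary case $D_1=C_2$, where naively replacing $C_1$ by $D_1$ fails to produce a pair of \emph{distinct} cycles; the remedy is to pivot to $D_2$ and use the fact, supplied by Lemma~\ref{lem:commonvertex}, that $D_1$ and $D_2$ themselves share vertices. The other point that needs care is ensuring that the original shared vertex $v^{*}$ is preserved by the decomposition of $C_1$, which I handle by the explicit case check against the construction in Lemma~\ref{lem:commonvertex}.
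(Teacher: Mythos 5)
Your proof is correct, and it leans on the same engine as the paper --- Lemma~\ref{lem:commonvertex} plus a descent on cycle length --- but it is organized differently. The paper runs an explicit iteration: if $C_1$ is not minimal it simply \emph{discards} $C_2$ and replaces the whole pair by the two sub-cycles $(C'_1,C'_2)$ of $C_1$, which Lemma~\ref{lem:commonvertex} already guarantees intersect each other; repeating this terminates because the lengths strictly drop. You instead take a pair minimizing $|C_1|+|C_2|$ and insist on \emph{retaining} $C_2$, which is why you need the extra observation that $V(D_1)\cup V(D_2)\supseteq V(C_1)$ (true, since in all three constructions of Lemma~\ref{lem:commonvertex} the two sub-cycles between them use every edge of $C_1$, hence every vertex), and why you must handle the boundary case $D_1=C_2$ by pivoting to $(D_2,D_1)$. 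Both of these extra steps are handled correctly, so the argument goes through; note only that your covering observation is a property of the \emph{proof} of Lemma~\ref{lem:commonvertex} rather than of its statement, so strictly speaking you are using a slightly strengthened version of that lemma, whereas the paper's route avoids this by never needing to track the original common vertex $v^{*}$. The trade-off is that your extremal formulation avoids having to specify a terminating measure for the iteration (the paper's ``after finite steps'' needs, e.g., the maximum of the two lengths as the decreasing quantity), at the cost of the two additional case checks.
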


\begin{proof}
If $C_{1}$ and $C_{2}$ are both minimal, then we take $C'_{1}=C_{1}$ and $C'_{2}=C_{2}$. If $C_{1}$ or $C_{2}$ is not minimal, assume $C_{1}$ is not minimal
without generality, according to lemma~\ref{lem:commonvertex}, there exists two distinct cycles $C'_{1}$ and $C'_{2}$ with $C'_{1}\subseteq C_{1}, C'_{2}\subseteq C_{1}, |C'_{1}|<|C_{1}|,|C'_{2}|<|C_{1}|,|C'_{1}|+|C'_{2}|\leq |C_{1}|+2$. Furthermore, $C'_{1}$ and $C'_{2}$ have common vertices(edges). If $C'_{1}$ or $C'_{2}$ is not minimal, we can repeat this process. Because every time $|C'_{1}|<|C_{1}|$, after finite steps, this process is terminated and we have two distinct minimal cycles $C'_{1}$ and $C'_{2}$ with common vertices(edges).
\end{proof}

\begin{lemma}\label{lem:2d2}
$H(V,E)$ have two distinct cycles $C_{1}$ and $C_{2}$ with common vertices, then there exists $v\in H$ such that $H\setminus v$ has at most $2d(v)-2$ components.
\end{lemma}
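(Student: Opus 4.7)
By Lemma~\ref{lem:minimalcycle}, I may assume $C_1,C_2$ are minimal cycles sharing a common join vertex $v^*$. The plan is to exhibit one vertex $v$ at which the two cycles force enough identifications among the neighbours of $v$ to cut the component count down by two.

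For book-keeping, call a pair $(e,u)$ with $v\in e$, $u\in e\setminus\{v\}$ a \emph{slot} at $v$; there are $2d(v)$ of them. Declare two slots equivalent iff their non-$v$ vertices lie in the same component of $H\setminus v$. Since $H$ is connected, a shortest-path argument shows each component of $H\setminus v$ contains some slot's vertex, so the number of components of $H\setminus v$ equals the number of equivalence classes of slots, hence at most $2d(v)$. Any cycle through $v$ using edges $e,f$ with cycle-adjacent vertices $a\in e,b\in f$ identifies $(e,a)\sim(f,b)$, since the rest of the cycle is a path from $a$ to $b$ in $H\setminus v$.

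Take $v=v^*$ and let $P_1,P_2$ denote the unordered pairs of slots that $C_1,C_2$ use at $v^*$. If $P_1\ne P_2$, the two identifications involve at least three distinct slots and collapse them into a single class, a net reduction of at least two from the singleton count; hence $H\setminus v^*$ has at most $2d(v^*)-2$ components. The only residual case is $P_1=P_2$: both cycles use the same two edges $e_1,e_2$ at $v^*$ with the same slots $a\in e_1,b\in e_2$, forcing the middles of $C_1,C_2$ to be two distinct $(a,b)$-paths avoiding $v^*$.

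In that residual case, walk $C_1,C_2$ in parallel from $v^*$ along the two middles. Since $C_1\ne C_2$, they cannot agree on every edge, so they first diverge at some vertex $w$: both cycles arrive at $w$ via a common edge $f$ from a common previous vertex $w'$ and then leave $w$ via distinct edges $g_1\ne g_2$. The two identifications $(f,w')\sim(g_1,\cdot)$ and $(f,w')\sim(g_2,\cdot)$ involve three distinct slots and merge them into one class, so $H\setminus w$ has at most $2d(w)-2$ components. The main obstacle is precisely this residual subcase; its resolution rests on the divergence-vertex construction, which succeeds because two distinct cycles with an identical slot-pattern at $v^*$ must still disagree on some later edge-choice.
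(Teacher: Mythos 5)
Your slot/equivalence-class framework is an attractive, more unified way to organize what the paper does by exhaustive case analysis, and the skeleton (components of $H\setminus v$ correspond to classes of at most $2d(v)$ slots, each cycle through $v$ contributing one identification, so two ``independent'' identifications suffice) is the right idea. But as written the argument has three concrete gaps. First, you assume the common vertex of the two minimal cycles is a join vertex of both; Lemma~\ref{lem:minimalcycle} only supplies a common vertex, which may be a non-join vertex of one or both cycles (the paper treats these separately in its Case~3.1(b),(c)). Your identification rule, stated only for a vertex lying between two consecutive cycle edges, does not cover a non-join vertex $v\in e_j=\{v_j,v,v_{j+1}\}$, where the cycle instead identifies the two slots $(e_j,v_j)$ and $(e_j,v_{j+1})$ of a single edge.

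Second, and more seriously, the key claim ``the rest of the cycle is a path from $a$ to $b$ in $H\setminus v$'' can fail: a minimal cycle need not be linear, since Definition~\ref{def:minimalcycle} constrains only non-adjacent edges. If $v=v_i$ and the adjacent edge $e_{i+1}$ also contains $v_i$ (e.g.\ $e_{i+1}=\{v_{i+1},v_i,v_{i+2}\}$), then $e_{i+1}$ is deleted in $H\setminus v_i$ and the remainder of the cycle no longer connects $v_{i+1}$ to $v_{i-1}$, so the asserted identification is false. The paper's Cases~1 and~2 (the configurations in Figures~\ref{graphs-1} and~\ref{graphs-2}) exist precisely to handle these non-linear minimal cycles by a different count, and your proposal has no substitute for them. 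Third, in your residual case $P_1=P_2$, the first disagreement between the two parallel walks need not be a choice of distinct next \emph{edges} at a common vertex: the cycles can traverse the same edge $f$ from $w'$ but continue to distinct next join vertices $c_1\neq c_2\in f$. That vertex-divergence case is also resolvable in your framework (deleting $w'$ identifies the three slots $(f_{\mathrm{prev}},w'')$, $(f,c_1)$, $(f,c_2)$), but you do not treat it. Each gap is fixable, yet filling them essentially reconstructs the paper's case analysis, so the proof is incomplete as it stands.
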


\begin{proof}
According to lemma~\ref{lem:minimalcycle}, $H(V,E)$ have two distinct minimal cycles $C_{1}$ and $C_{2}$ with common vertices. \\

Case $1$: $C_{1}$ is not linear and $|C_{1}|\geq 4$. Because $C_{1}$ is minimal, $C_{1}$ must be shown in Figure~\ref{graphs-1}. So $H\setminus v$ has at most  $2+2[d(v)-2]=2d(v)-2$ components.\\

\begin{figure}[h]
\begin{minipage}[h]{0.5\linewidth}
\centering
\includegraphics[width=2in]{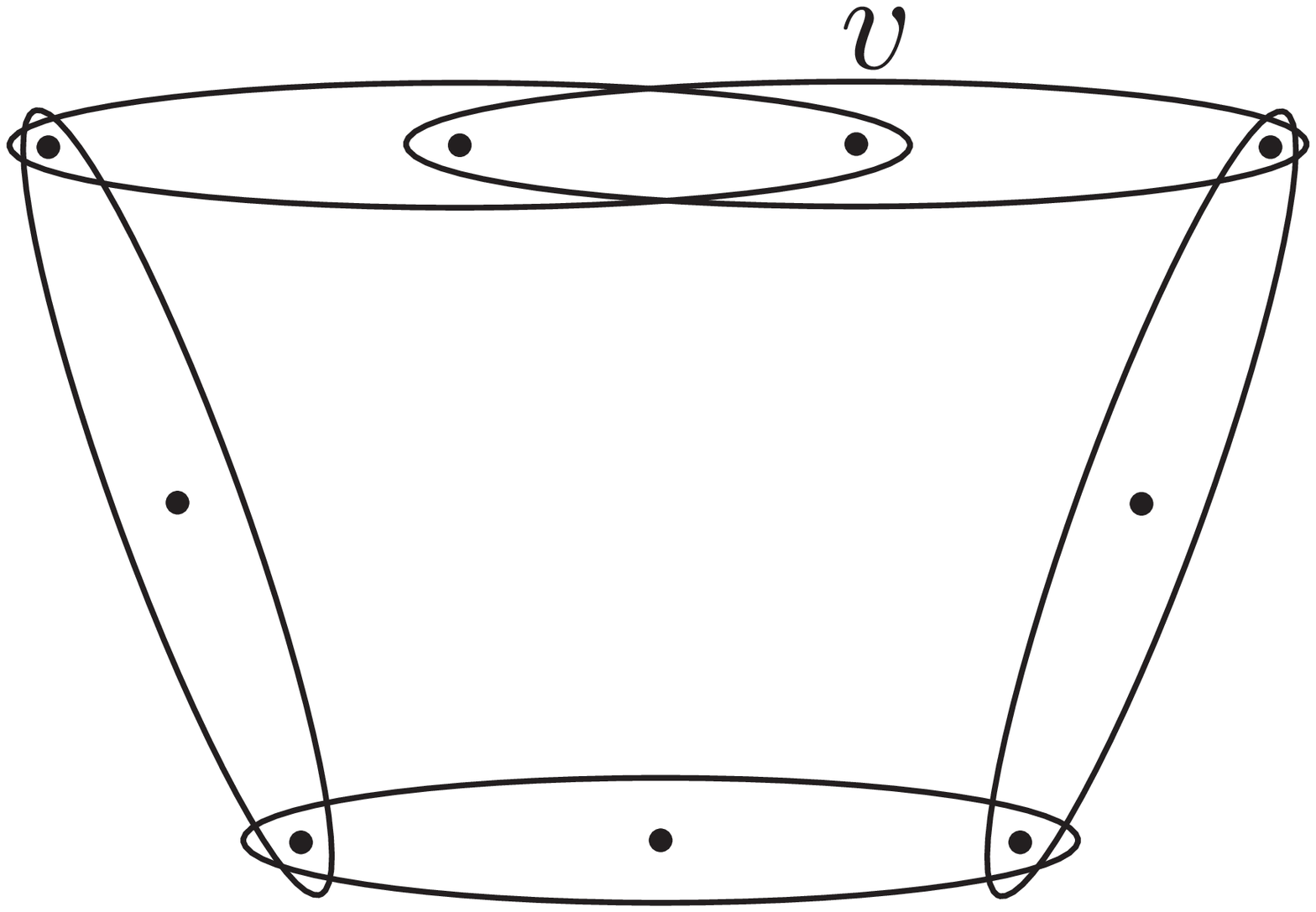}
\caption{$C_{1}$}
\label{graphs-1}
\end{minipage}%
\begin{minipage}[h]{0.5\linewidth}
\centering
\includegraphics[width=2in]{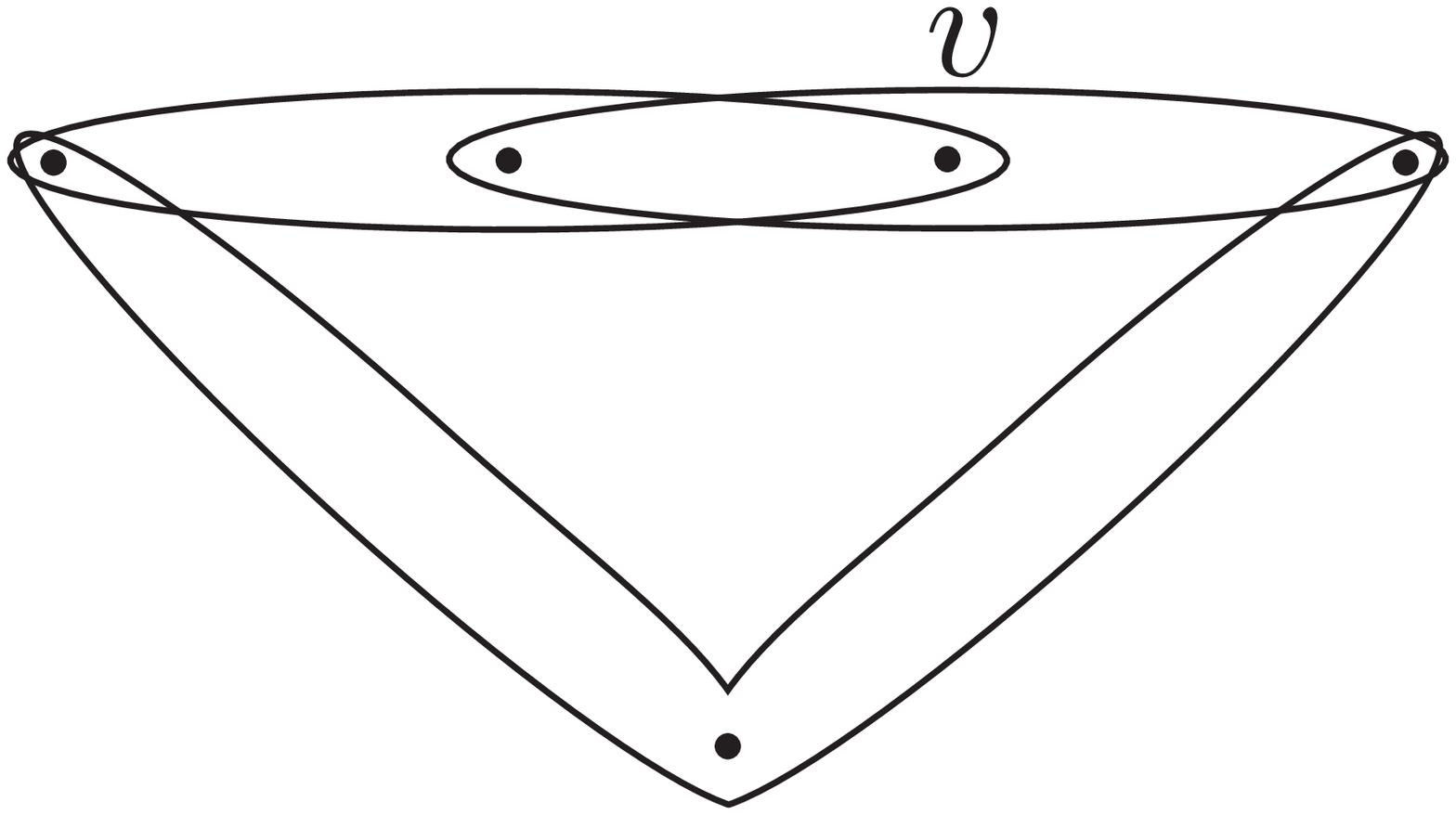}
\caption{$C_{1}$}
\label{graphs-2}
\end{minipage}
\end{figure}

Case $2$: $C_{1}$ is not linear and $|C_{1}|=3$. Assume $C_{1}=v_{1}e_{1}v_{2}e_{2}v_{3}e_{3}v_{1}$, here $e_{1}=\{v_{1},u_{1},v_{2}\},e_{2}=\{v_{2},u_{2},v_{3}\},e_{3}=\{v_{3},u_{3},v_{1}\}$. \\

If $e_{1}\cap e_{2}\cap e_{3}\neq \emptyset$, we know $C_{1}$ has at most $5$ vertices.(If $v_{1},v_{2},v_{3},u_{1},u_{2},u_{3}$ are all distinct, there must be $e_{1}\cap e_{2}\cap e_{3}= \emptyset$). So let us pick arbitrarily $u\in e_{1}\cap e_{2}\cap e_{3}$ and $H\setminus v$ has at most  $4+2[d(v)-3]=2d(v)-2$ components.\\

If $e_{1}\cap e_{2}\cap e_{3}\neq \emptyset$, $C_{1}$ must be shown in Figure~~\ref{graphs-2}. So $H\setminus v$ has at most  $2+2[d(v)-2]=2d(v)-2$ components.\\

Case $3$: $C_{1}$ and $C_{2}$ are both linear or $2-$ cycles. Assume $C_{1}=v_{1}e_{1}v_{2}e_{2}...e_{k}v_{1}$ and $C_{2}=\tilde{v_{1}}\tilde{e_{1}}\tilde{v_{2}}\tilde{e_{2}}...\tilde{e_{t}}\tilde{v_{1}}$, here $k\geq 2, t\geq 2$. \\

$1.$ $C_{1}$ and $C_{2}$ have no common edges. Now because $C_{1}$ and $C_{2}$ have common vertices, let us pick arbitrarily a common vertex $v\in C_{1}\cap C_{2}$. \\

$(a)$. $v$ is a join-vertex of both $C_{1}$ and $C_{2}$, assume $v=v_{2}=\tilde{v_{2}}$. Now we have $e_{1}=\{v_{1},u_{1},v_{2}\},e_{2}=\{v_{2},u_{2},v_{3}\},\tilde{e_{1}}=\{\tilde{v_{1}},\tilde{u_{1}},\tilde{v_{2}}\},
\tilde{e_{2}}=\{\tilde{v_{2}},\tilde{u_{2}},\tilde{v_{3}}\}$.\\

It is easy to see in $H\setminus v$, there exists $v_{1}-v_{3}$ path and $\tilde{v_{1}}-\tilde{v_{3}}$ path. So $H\setminus v$ has at most  $6+2[d(v)-4]=2d(v)-2$ components.\\

$(b)$. $v$ is a join-vertex of $C_{1}$ and a non-join vertex of $C_{2}$, assume $v=v_{2}=\tilde{u_{1}}$. Now we have $e_{1}=\{v_{1},u_{1},v_{2}\},e_{2}=\{v_{2},u_{2},v_{3}\},\tilde{e_{1}}=\{\tilde{v_{1}},\tilde{u_{1}},\tilde{v_{2}}\}$.\\

It is easy to see in $H\setminus v$, there exists $v_{1}-v_{3}$ path and $\tilde{v_{1}}-\tilde{v_{2}}$ path. So $H\setminus v$ has at most  $4+2[d(v)-3]=2d(v)-2$ components.\\

$(c)$. $v$ is a non-join vertex of both $C_{1}$ and $C_{2}$, assume $v=u_{1}=\tilde{u_{1}}$. Now we have $e_{1}=\{v_{1},u_{1},v_{2}\},\tilde{e_{1}}=\{\tilde{v_{1}},\tilde{u_{1}},\tilde{v_{2}}\}$.\\

It is easy to see in $H\setminus v$, there exists $v_{1}-v_{2}$ path and $\tilde{v_{1}}-\tilde{v_{2}}$ path. So $H\setminus v$ has at most  $2+2[d(v)-2]=2d(v)-2$ components.\\

$2.$ $C_{1}$ and $C_{2}$ have common edges. Obviously there exists two adjacent edges in $C_{1}$ such that one edge belongs to $C_{2}$ and the other edge
does not belong to $C_{2}$. We can assume $e_{1}\in C_{2}$ and $e_{2}\not\in C_{2}$, furthermore, $e_{1}=\tilde{e_{1}}$. \\

$(a)$ $v_{2}=\tilde{v_{2}}$ Now we have $e_{1}=\{v_{1},u_{1},v_{2}\},e_{2}=\{v_{2},u_{2},v_{3}\},\tilde{e_{1}}=\{\tilde{v_{1}},\tilde{u_{1}},\tilde{v_{2}}\},
\tilde{e_{2}}=\{\tilde{v_{2}},\tilde{u_{2}},\tilde{v_{3}}\}$.\\

It is easy to see in $H\setminus v_{2}(\tilde{v_{2}})$, there exists $v_{1}-v_{3}$ path and $\tilde{v_{1}}-\tilde{v_{3}}$ path. Combined with $e_{1}=\tilde{e_{1}}$, so $H\setminus v$ has at most  $4+2[d(v)-3]=2d(v)-2$ components.\\

$(b)$ $v_{2}=\tilde{u_{1}}$ Now we have $e_{1}=\{v_{1},u_{1},v_{2}\},e_{2}=\{v_{2},u_{2},v_{3}\},\tilde{e_{1}}=\{\tilde{v_{1}},\tilde{u_{1}},\tilde{v_{2}}\}$.\\

It is easy to see in $H\setminus v_{2}(\tilde{u_{1}})$, there exists $v_{1}-v_{3}$ path and $\tilde{v_{1}}-\tilde{v_{2}}$ path. Combined with $e_{1}=\tilde{e_{1}}$, so $H\setminus v$ has at most  $2+2[d(v)-2]=2d(v)-2$ components.\\

$(c)$ $v_{2}=\tilde{v_{1}}$ Now we have $e_{1}=\{v_{1},u_{1},v_{2}\},e_{2}=\{v_{2},u_{2},v_{3}\},\tilde{e_{1}}=\{\tilde{v_{1}},\tilde{u_{1}},\tilde{v_{2}}\},
\tilde{e_{t}}=\{\tilde{v_{t}},\tilde{u_{t}},\tilde{v_{1}}\}$.\\

It is easy to see in $H\setminus v_{2}(\tilde{v_{1}})$, there exists $v_{1}-v_{3}$ path and $\tilde{v_{t}}-\tilde{v_{2}}$ path. Combined with $e_{1}=\tilde{e_{1}}$, so $H\setminus v$ has at most  $4+2[d(v)-3]=2d(v)-2$ components.\\

Above all, in whatever case, there always exists $v\in V$ such that $H\setminus v$ has at most $2d(v)-2$ components.

\end{proof}

Now we will prove our main theorem as following:

\begin{theorem}\label{the:mainhyper}
Let $H(V,E)$ be a connected $3$-uniform  hypergraph. Then $\tau(H)\leq \frac{2m+1}{3}$ holds on where $m$ is the number of edges.
\end{theorem}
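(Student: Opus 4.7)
The plan is to proceed by strong induction on the number of edges $m$, with the trivial base cases $m \leq 1$.

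For the inductive step, I split into three cases based on the cycle structure. In the first case where $H$ is a hypertree, I invoke Theorem~\ref{the:Konig} ($\tau(H) = \nu(H)$) together with the $3$-uniformity bound $\nu(H) \leq n/3$ (each matching edge uses three distinct vertices) and Lemma~\ref{connectntree2m1} ($n = 2m + 1$), which immediately yields $\tau(H) \leq (2m+1)/3$.

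In the second case, where $H$ contains two distinct cycles sharing common vertices, I apply Lemma~\ref{lem:2d2} to obtain a vertex $v$ such that $H\setminus v$ has $p \leq 2d(v) - 2$ components $H_1, \ldots, H_p$ with total edge count $\sum m_i = m - d(v)$. Since each $H_i$ is a connected $3$-uniform hypergraph with fewer edges, the inductive hypothesis gives $\tau(H_i) \leq (2m_i + 1)/3$. Observing that $\{v\}$ together with vertex covers of the $H_i$ forms a vertex cover of $H$:
\begin{equation}
\tau(H) \;\leq\; 1 + \sum_{i=1}^{p} \tau(H_i) \;\leq\; 1 + \frac{2(m - d(v)) + p}{3} \;\leq\; \frac{2m + 1}{3}.
\end{equation}

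The third case---where $H$ has at least one cycle but no two cycles share common vertices---will be the main obstacle, because Lemma~\ref{lem:2d2} does not apply. My plan for this cactus-like setting is to pick a vertex $v$ on some cycle $C$ and argue that the number of \emph{non-isolated} components of $H \setminus v$ is at most $2d(v) - 2$ (isolated components contribute $0$ to the vertex cover and can be dropped), so the same arithmetic as above closes the induction. The cycle $C$ always supplies one "saving" by connecting two of $v$'s freed neighbors through the remaining cycle path, and freed vertices of degree one in $H$ become isolated after removal; a careful choice of $v$ (a join vertex of $C$ when its adjacent non-join vertices are pendants, otherwise a suitable non-join vertex that lies on a branch) should yield the required bound via subcase analysis on how branches are attached to $C$. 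As a fallback, I would instead remove two consecutive cycle edges $e_1, e_2$ at a join vertex $v$ and cover them by $v$ alone, reducing to a hypergraph with $m-2$ edges whose non-isolated components satisfy the induction hypothesis.
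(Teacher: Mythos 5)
Your Case 1 and Case 2 reproduce the paper's Claims~\ref{cla:cycle} and~\ref{cla:componentcondition} essentially verbatim (the paper phrases them as a minimal counterexample rather than an induction, which is equivalent), so those parts are fine. The problem is Case 3, which is where all of the actual difficulty of the theorem lives, and there your text is a plan rather than a proof: ``a careful choice of $v$ \ldots should yield the required bound via subcase analysis'' is precisely the statement that needs to be proved, and it is not proved. The gap is real, not cosmetic: the bound ``$H\setminus v$ has at most $2d(v)-2$ non-isolated components'' is simply false for many natural choices of $v$ on a cycle. Take the linear $3$-cycle $C=v_1e_1v_2e_2v_3e_3v_1$ with $e_1=\{v_1,u_1,v_2\}$, $e_2=\{v_2,u_2,v_3\}$, $e_3=\{v_3,u_3,v_1\}$, and attach pendant edges $f_1$ at $u_1$ and $f_2$ at $u_2$. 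Then $d(v_2)=2$ but $H\setminus v_2$ has three non-isolated components (the one containing $e_3$, the one containing $f_1$, and the one containing $f_2$), exceeding $2d(v_2)-2=2$; your fallback of deleting the two cycle edges at $v_2$ fails for the same reason. Other vertices do work in this example (e.g.\ $u_1$), but you give no argument that a good vertex always exists in a general cactus-like hypergraph, and finding one requires a structural analysis of how the trees hang off the cycles --- which is exactly the work you have deferred.

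For comparison, the paper does not attempt a single vertex-deletion step in this case at all. It first upgrades Claim~\ref{cla:componentcondition} via Lemma~\ref{lem:2d2} to conclude that in a minimal counterexample all cycles are pairwise vertex-disjoint, then builds the auxiliary tree $T(V_1\cup V_2,E_T)$ whose nodes are the cycles and the connecting hypertrees, and analyzes a leaf of that tree. The key additional tool is Claim~\ref{cla:subhypergraph}: for \emph{any} connected edge-partition $(E_1,E_2)$ of a minimal counterexample, both parts must satisfy $\tau(H_i)=\frac{2m_i+1}{3}$ exactly. Repeated use of this forced equality shows the leaf must be a single pendant edge attached to a cycle, and then two short computations (join versus non-join attachment) give the contradiction. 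If you want to salvage your outline, you would need to either carry out the missing subcase analysis in full (essentially reconstructing the paper's leaf argument) or adopt the edge-partition/forced-equality mechanism, which is what actually lets the paper narrow the structure down to something checkable.
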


\begin{proof}

We prove this theorem by contradiction. Let us take out the counterexample $H=(V,E)$ with minimum edges, thus $\tau(H)> \frac{2m+1}{3}$. We have a series of claims:

\begin{claim}\label{cla:cycle}
$H=(V,E)$ has some cycles
\end{claim}

If $H=(V,E)$ is acyclic, according to Lemma~\ref{connectn2m1} and Theorem~\ref{the:Konig}, we have next inequalities, a contradiction.

\[\frac{2m+1}{3}< \tau(H)=\nu(H)\leq \frac{n}{3}=\frac{2m+1}{3}\]

\begin{claim}\label{cla:componentcondition}
For each vertex $v\in \mathcal V$, $\mathcal H\setminus v$ has at least $2d(v)-1$ components.
\end{claim}

If there exist a vertex $v\in V$, $\mathcal H\setminus v$ has $k$ components with $k\leq 2d(v)-2$. Assume $H_{i}, i\in [k]$ are $k$ components of $H\setminus v$ and $m_{i}$ is the number of edges in $H_{i}, i\in [k]$. We have next inequalities:

\[\tau(H)\leq \tau(H\setminus v)+1= \sum_{i\in [k]}\tau(H_{i})+1\leq \sum_{i\in [k]}\frac{2m_{i}+1}{3}+1=\frac{2\sum_{i\in [k]}m_{i}+k}{3}+1\]
\[=\frac{2[m-d(v)]+k}{3}+1\leq \frac{2[m-d(v)]+2d(v)-2}{3}+1=\frac{2m+1}{3}\]

where the second inequality holds on because $H$ is the counterexample with minimum edges. This is a contradiction with $\tau(H)> \frac{2m+1}{3}$.Combined with lemma~\ref{lem:2d2}, we have next claim instantly.

\begin{claim}\label{cla:cyclevertexdisjoint}
Every two distinct cycles in $H$ are vertex-disjoint.
\end{claim}

According to claim, it is easy to know every cycle of $H=(V,E)$ is $2-$ cycle or linear minimal and every two distinct cycles are joined together through a hypertree. Next we will construct a tree $T(V_{1}\cup V_{2},E_{T})$ by $H=(V,E)$. $V_{1}$ denotes the set of cycles in $H=(V,E)$, $V_{2}$ denotes the set of hypertrees in $H=(V,E)$ and for each $v_{1}\in V_{1}, v_{2}\in V_{2}, e(v_{1},v_{2})\in E_{T}$ if and only if the cycle and the hypertree are connected. See Figure~\ref{tree1} for an illustration.

\begin{figure}[h]
\begin{center}
\includegraphics[scale=0.48]{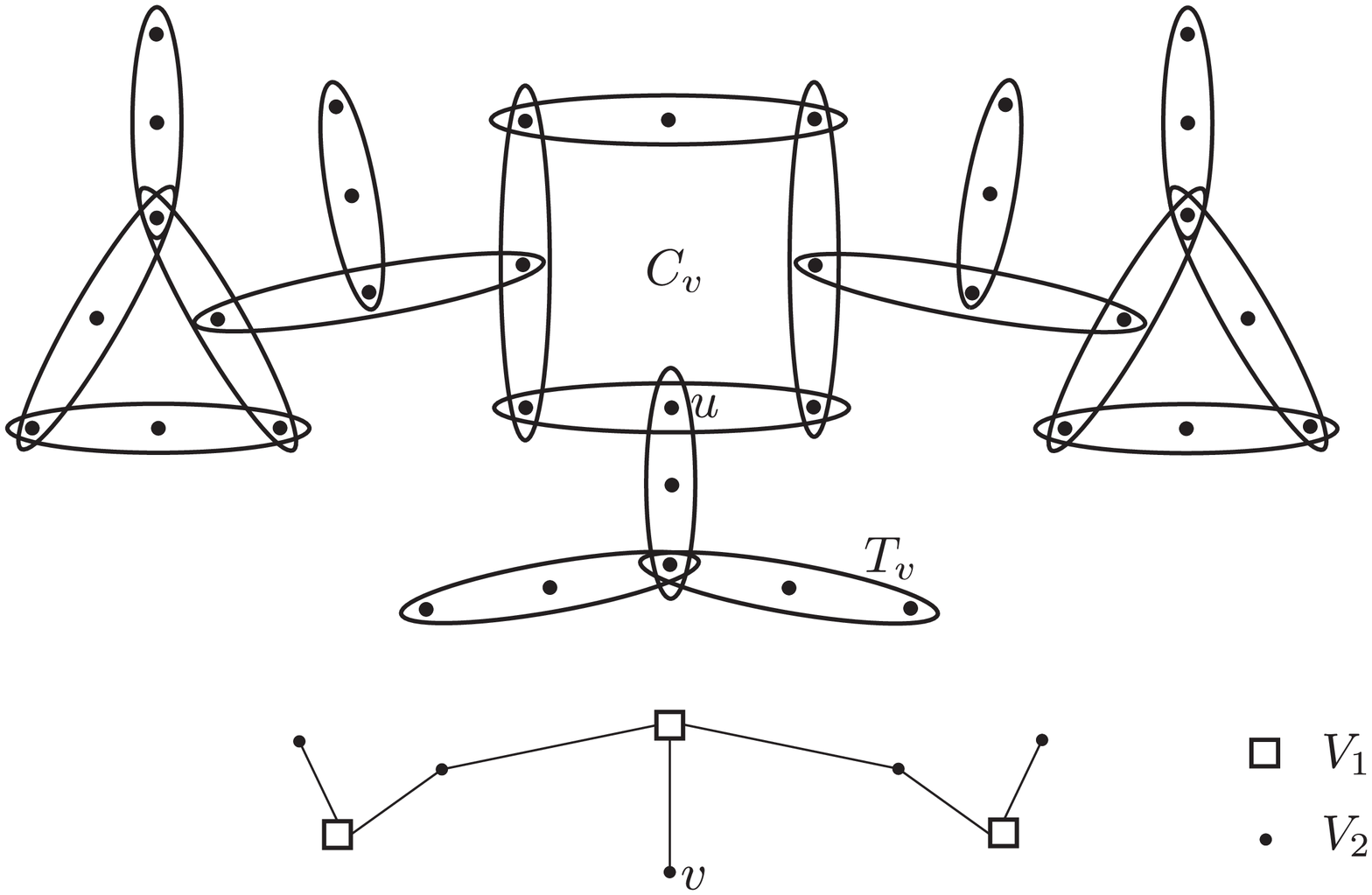}
\caption{\label{tree1}}
\end{center}
\end{figure}

$(E_{1},E_{2})$ is an nonempty partition of $E$, $H_{1}$ is edge-induced subhypergraph of $H$ by $E_{1}$, $H_{2}$ is edge-induced subhypergraph of $H$ by $E_{2}$. $m_{1}$ is the number of edges in $H_{1}$ and $m_{2}$ is the number of edges in $H_{2}$. We have next important claim:

\begin{claim}\label{cla:subhypergraph}
If $H_{1}$ and $H_{2}$ are both connected, then $\tau(H_{1})= \frac{2m_{1}+1}{3}, \tau(\mathcal H_{2})= \frac{2m_{2}+1}{3}$
\end{claim}

This claim is instant by the inequalities below:

\[\frac{2m+1}{3}< \tau(H)\leq \tau(H_{1})+\tau(H_{2})\leq \frac{2m_{1}+1}{3}+\frac{2m_{2}+1}{3}=\frac{2m+2}{3}\]

Next let us consider a leaf $v$ of $T(V_{1}\cup V_{2},E_{T})$ and the leaf corresponds to a subhypergraph, which is a minimal cycle or a hypertree of $H$. We can take the edges in this subhypergraph as $E_{1}$ and other edges as $E_{2}$. Because $v$ is a leaf of $T(V_{1}\cup V_{2},E)$, we know $H_{1}$ and $H_{2}$ are both connected. According to claim~\ref{cla:subhypergraph}, we have $\tau(H_{1})= \frac{2m_{1}+1}{3}, \tau(H_{2})= \frac{2m_{2}+1}{3}$. For any cycle $C$, $\tau(C)\leq \frac{m_{c}+1}{2}<\frac{2m_{c}+1}{3}$. Thus the leaf must correspond to a hypertree. Let us denote the hypertree as $T_{v}$. Because $v$ is a leaf of $T(V_{1}\cup V_{2},E)$, $T_{v}$ is connected with an unique cycle $C_{v}$. We assume $T_{v}$ and $C_{v}$ are connected together through the vertex $u$. See Figure~\ref{tree1} for an illustration.\\

In $T_{v}$, among these vertices with largest distance from $u$, we pick arbitrarily one, denoted as $w$. Next we will prove the distance $d(u,w)=1$.

\begin{claim}\label{cla:distance1}
 $d(u,w)=1$.
\end{claim}

If the distance $d(u,w)\geq 2$, we have a subhypergraph as shown in Figure~\ref{tree2}. Because $w$ is the farthest vertex from $u$ in $T_{v}$, there must be $d(w)=d(w_{3})=1$. We can take the edges incident with $w_{1}$ or $w_{2}$ as $E_{1}$ and other edges as $E_{2}$. It is easy to know $H_{1}$ and $H_{2}$ are both connected. According to claim~\ref{cla:subhypergraph}, we have $\tau(H_{1})= \frac{2m_{1}+1}{3}, \tau(H_{2})= \frac{2m_{2}+1}{3}$. When $|E_{1}|=2$, we have $d(w_{1})=1, d(w_{2})=2$ and $w_{2}$ is a vertex cover for $E_{1}$, thus $\tau(H_{1})=1< \frac{2\times2+1}{3}=\frac{2m_{1}+1}{3}$, a contradiction. When $|E_{1}|\geq 3$, we have $d(w_{1})>1$ or $d(w_{2})>2$ and $w_{1}, w_{2}$ are a vertex cover for $\mathcal E_{1}$, thus $\tau(\mathcal H_{1})\leq 2< \frac{2\times3+1}{3}\leq \frac{2m_{1}+1}{3}$, also a contradiction. See Figure~\ref{tree2} for an illustration. Above all, in whatever case, there is always a contradiction, thus our assumption doesn't hold on and $d(u,w)=1$.

\begin{claim}\label{cla:oneedge}
$T_{v}$ is one edge
\end{claim}

According to claim~\ref{cla:distance1}, we know every edge in $T_{v}$ is incident with the vertex $u$. We can take the edges in $T_{v}$ as $E_{1}$ and other edges as $E_{2}$. It is easy to know $H_{1}$ and $H_{2}$ are both connected. According to claim~\ref{cla:subhypergraph}, we have $\tau(H_{1})= \frac{2m_{1}+1}{3}, \tau(H_{2})= \frac{2m_{2}+1}{3}$. When $|E_{1}|\geq 2$, we have $u$ is a vertex cover for $E_{1}$, thus $\tau(H_{1})=1< \frac{2\times2+1}{3}\leq \frac{2m_{1}+1}{3}$, a contradiction. Thus $|E_{1}|=1$ and $T_{v}$ is one edge. See Figure~\ref{tree3}$(a)$ for an illustration. \\

Now we know $T_{v}$ is one edge incident with $u$. We will finish our proof by the following two cases:\\

Case $1$: $u$ is a nonjoin-vertex of $C_{v}$, as shown in Figure~\ref{tree3}$(b)$. We can take the edges incident with $u$ as $E_{1}$ and other edges as $E_{2}$. It is easy to know $H_{1}$ and $H_{2}$ are both connected. According to claim~\ref{cla:subhypergraph}, we have $\tau(H_{1})= \frac{2m_{1}+1}{3}, \tau(H_{2})= \frac{2m_{2}+1}{3}$. But $|E_{1}|= 2$, we have $u$ is a vertex cover for $E_{1}$, thus $\tau(H_{1})=1< \frac{2\times2+1}{3}= \frac{2m_{1}+1}{3}$, a contradiction. See Figure~\ref{tree3}$(b)$ for an illustration.\\

Case $2$: $u$ is a join-vertex of $C_{v}$, as shown in Figure~\ref{tree3}$(c)$. We can take the edges $\{e,e'\}$ incident with $u$ as $E_{1}$ and other edges as $E_{2}$. It is easy to know $H_{1}$ is connected and $H_{2}$ has at most two components. If $H_{2}$ is connected, according to claim~\ref{cla:subhypergraph}, we have $\tau(H_{1})= \frac{2m_{1}+1}{3}, \tau(H_{2})= \frac{2m_{2}+1}{3}$. But $|E_{1}|= 2$, we have $u$ is a vertex cover for $E_{1}$, thus $\tau(H_{1})=1< \frac{2\times2+1}{3}= \frac{2m_{1}+1}{3}$, a contradiction. If $H_{2}$ has two components, denoted as $H_{3}$ and $H_{4}$, we have next inequalities, also a contradiction. See Figure~\ref{tree3}$(c)$ for an illustration.

\[\frac{2m+1}{3}< \tau(H)\leq \tau(H_{1})+\tau(H_{2})=\tau(H_{1})+\tau(H_{3})+\tau(H_{4})\]
\[\leq 1+\frac{2m_{3}+1}{3}+\frac{2m_{4}+1}{3}=1+\frac{2m_{2}+2}{3}=1+\frac{2(m-2)+2}{3} =\frac{2m+1}{3}\]

Above all, in whatever case, there is always a contradiction, thus our assumption doesn't hold on and the theorem is proven.

\end{proof}

\begin{figure}[h]
\begin{center}
\includegraphics[scale=0.48]{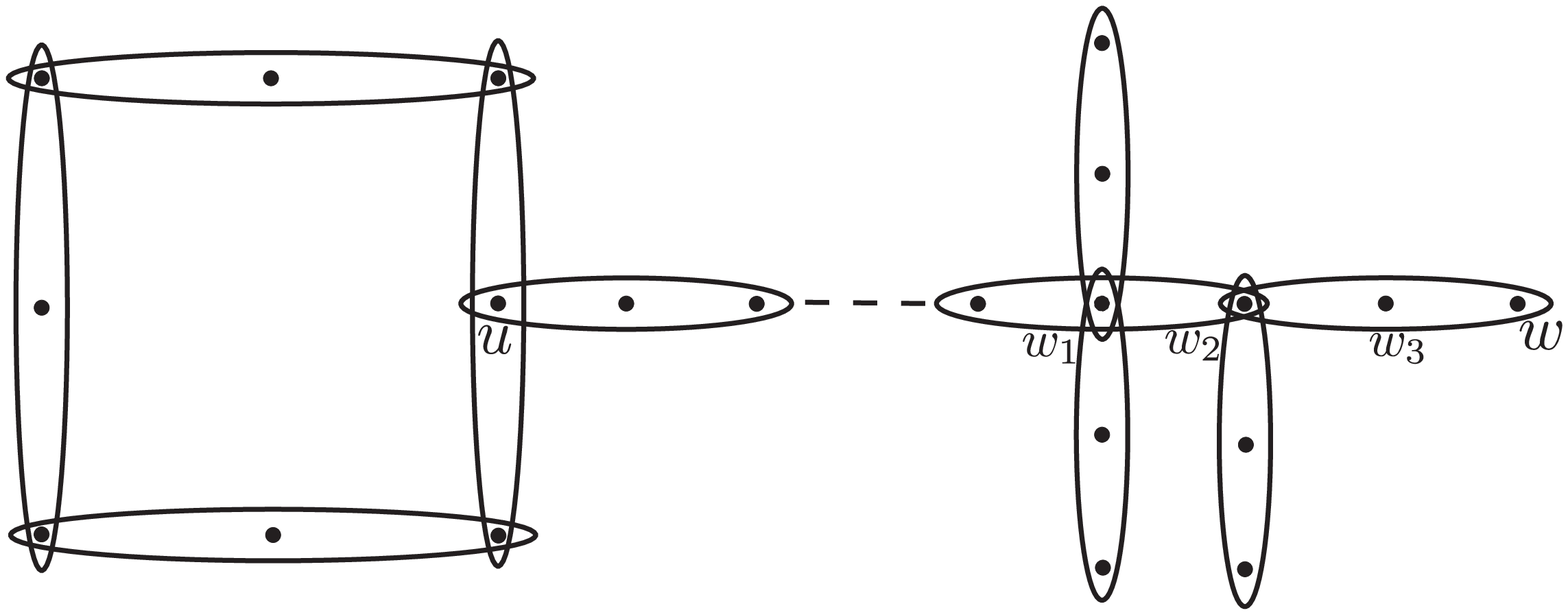}
\caption{\label{tree2}}
\end{center}
\end{figure}

\begin{figure}[h]
\begin{center}
\includegraphics[scale=0.48]{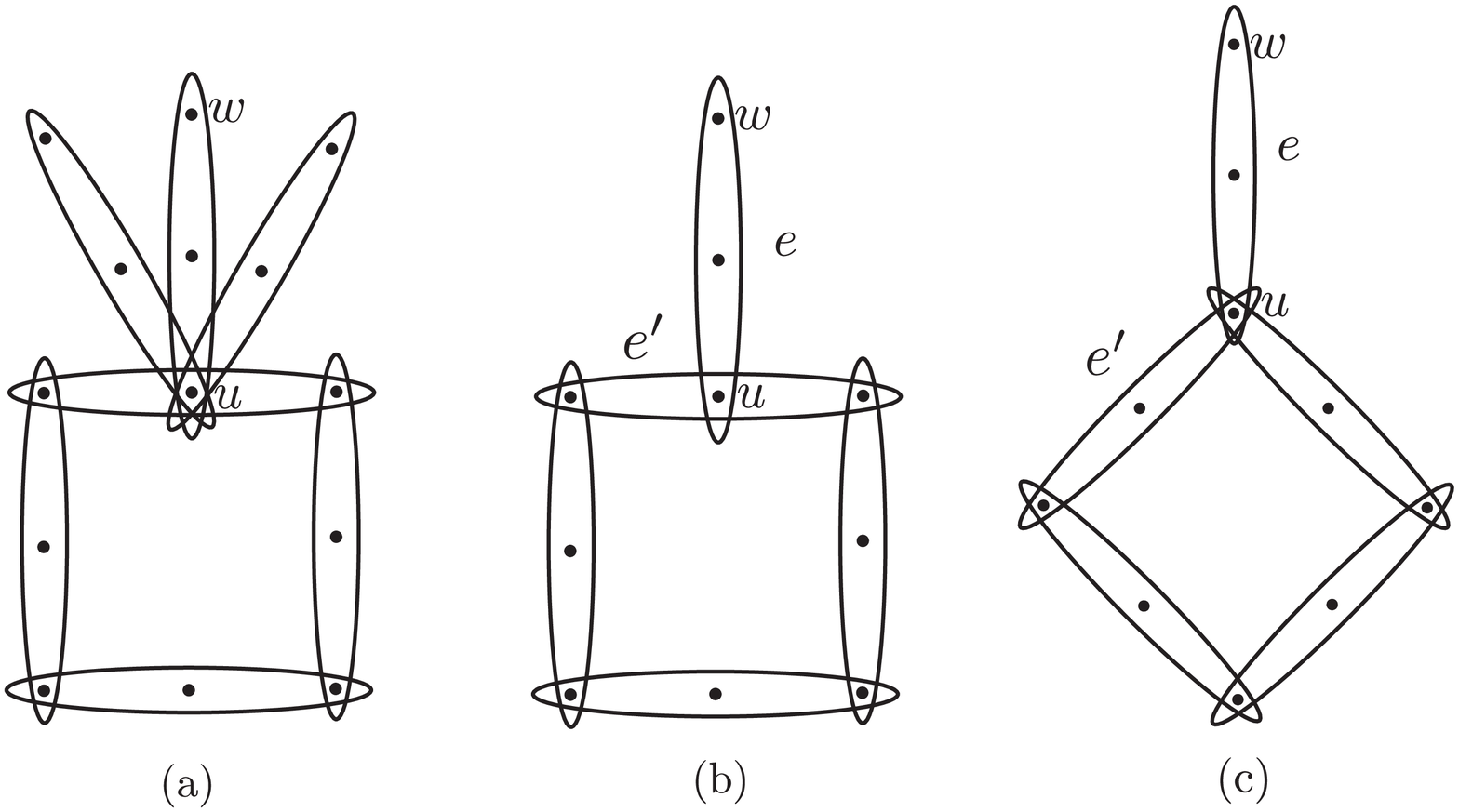}
\caption{\label{tree3}}
\end{center}
\end{figure}

\subsection{Extremal hypergraphs}\label{sec:extremalhyper}

In this subsection, we will prove the theorem as following:

\begin{theorem}\label{the:mainhyper}
Let $H(V,E)$ be a connected $3$-uniform  hypergraph. Then $\tau(H)\leq \frac{2m+1}{3}$ holds on where $m$ is the number of edges.
Furthermore, $\tau(H)= \frac{2m+1}{3}$ if and only if $H(V,E)$ is a hypertree with perfect matching.
\end{theorem}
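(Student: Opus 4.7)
The inequality $\tau(H)\leq(2m+1)/3$ was established in Subsection~\ref{sec:vcgeneralbounds}; my proposal concerns the equality case. For sufficiency, assume $H$ is a hypertree with perfect matching $M$. Lemma~\ref{connectntree2m1} gives $n=2m+1$, so $\nu(H)\geq|M|=n/3=(2m+1)/3$. Since $H$ is acyclic, Theorem~\ref{the:Konig} yields $\tau(H)=\nu(H)$, and the upper bound pins $\tau(H)=\nu(H)=(2m+1)/3$.

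For necessity, I would argue by induction on $m$. The base $m=1$ is trivial. In the inductive step, the hypertree case is immediate from Theorem~\ref{the:Konig} combined with Lemma~\ref{connectntree2m1}: $\nu(H)=\tau(H)=(2m+1)/3=n/3$ forces a perfect matching. The substantive work is ruling out the case that $H$ contains a cycle.

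For the cyclic case I would reuse the structural machinery of Subsection~\ref{sec:vcgeneralbounds}. For any vertex $v$ with $H\setminus v=H_1\sqcup\cdots\sqcup H_k$, the chain
\[
\frac{2m+1}{3}=\tau(H)\leq\tau(H\setminus v)+1\leq\sum_i\frac{2m_i+1}{3}+1=\frac{2(m-d(v))+k+3}{3}
\]
forces $k\geq 2d(v)-2$ for every $v$. If every $v$ satisfies the strict bound $k\geq 2d(v)-1$, then Lemma~\ref{lem:2d2} implies all cycles of $H$ are pairwise vertex-disjoint; I would then build the auxiliary tree $T(V_1\cup V_2,E_T)$ from the proof of Theorem~\ref{the:mainhyper} and rerun its leaf analysis. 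At a leaf, an edge partition $(E_1,E_2)$ yields connected pieces $H_1,H_2$ with
\[
\frac{2m+1}{3}\leq\tau(H_1)+\tau(H_2)\leq\frac{2m+2}{3},
\]
and integrality (noting $m\equiv 1\pmod 3$) pins the sum to $(2m+1)/3$; by induction exactly one piece is a hypertree with perfect matching and the other has $\tau$ strictly below its bound, so the original case analysis (cases $(a),(b),(c)$ and the two final cases) can be rerun with equalities and produces a contradiction in every configuration. Otherwise some $v$ attains $k=2d(v)-2$, and equality propagates so that each $H_i$ is a hypertree with perfect matching by induction and $\tau(H)=\tau(H\setminus v)+1$. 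Since $H\setminus v$ is a disjoint union of hypertrees it is acyclic, so any cycle of $H$ must pass through $v$; one then shows that such a cycle allows one to stitch together the covers of the $H_i$ (augmented via the cycle path) into a cover of $H$ of size $\tau(H\setminus v)$ avoiding $v$, contradicting $\tau(H)=\tau(H\setminus v)+1$.

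The main obstacle is the delicate bookkeeping in the cyclic case: the proof of Theorem~\ref{the:mainhyper} exploited strict slack $\tau(H)>(2m+1)/3$ of a counterexample, whereas here we have only exact equality, so all structural slack must come from the inductive hypothesis applied to subpieces. In particular, the cover-improvement argument in the $k=2d(v)-2$ regime requires a careful path-augmentation through the cycle, and the tight/slack dichotomy in the vertex-disjoint-cycle regime requires a subtle integrality argument at each leaf subcase.
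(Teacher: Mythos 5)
Your sufficiency argument and the reduction of necessity to ``rule out cycles'' coincide with the paper's. The gap is in the cyclic case, at precisely the points you yourself flag as ``the main obstacle'': neither is actually filled. First, in the vertex-disjoint-cycle branch you assert that rerunning the leaf analysis from Subsection~\ref{sec:vcgeneralbounds} ``produces a contradiction in every configuration.'' It does not. In the configuration where the pendant edge attaches at a join vertex $u$ of the cycle and $H_{2}=H\setminus\{e,e'\}$ splits into two components $H_{3},H_{4}$, the chain gives
\[
\tau(H)\le \tau(H_{1})+\tau(H_{3})+\tau(H_{4})\le 1+\tfrac{2m_{3}+1}{3}+\tfrac{2m_{4}+1}{3}=\tfrac{2m+1}{3},
\]
which under the hypothesis $\tau(H)=\frac{2m+1}{3}$ is an equality, not a contradiction; the original proof got its contradiction there only from the strict assumption $\tau(H)>\frac{2m+1}{3}$. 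The same problem already appears earlier in the leaf analysis: with only the pinned sum $\tau(H_{1})+\tau(H_{2})=\frac{2m+1}{3}$ you can conclude $\tau(H_{1})\ge\frac{2m_{1}}{3}$ rather than $\tau(H_{1})=\frac{2m_{1}+1}{3}$, so the subcase $|E_{1}|=3$, $\tau(H_{1})=2$ of the distance argument is no longer contradictory either. Second, in the $k=2d(v)-2$ branch, the ``stitching'' step needs each component $H_{i}$ (a hypertree with perfect matching) to admit a \emph{minimum} vertex cover containing a prescribed vertex lying in an edge through $v$; this is not automatic and is nowhere established. As written, the proposal is a program, not a proof.

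For comparison, the paper closes the necessity direction by a different decomposition that sidesteps all of this. Taking a minimal counterexample with $\tau(H)=\frac{2m+1}{3}$, it first shows any two cycles must share an \emph{edge}: otherwise $E$ splits into two connected cycle-containing pieces, and minimality forces each such piece (which, containing a cycle, cannot be a hypertree) to satisfy the strictly better bound $\tau(H_{i})\le\frac{2m_{i}}{3}$, whence $\tau(H)\le\frac{2m}{3}$. Consequently deleting the edges of a shortest cycle $C$ leaves only hypertrees; each is then shown to be a single pendant edge, and a short case analysis on how such an edge attaches to $C$ finishes. The device you are missing is this use of minimality to extract a full $\frac{1}{3}$ of slack from any sub-piece that still contains a cycle; without some such mechanism, merely tracking tightness through the strict-inequality proof's case analysis does not close the argument.
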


\begin{proof}

Sufficiency: If $H(V,E)$ is a hypertree with perfect matching, then according to lemma~\ref{connectntree2m1} and theorem~\ref{the:Konig}, we have next equalities:

\[\tau(H)=\nu(H)=\frac{n}{3}=\frac{2m+1}{3}\]

Necessity: When $\tau(H)= \frac{2m+1}{3}$, we need to prove $H(V,E)$ is a hypertree with perfect matching. It is enough to prove $H(V,E)$ is acyclic.
Actually, if $H(V,E)$ is acyclic, according to lemma~\ref{connectn2m1} and theorem~\ref{the:Konig}, we have next inequalities:

\[\tau(H)=\nu(H)\leq \frac{n}{3}=\frac{2m+1}{3}\]

Combined with $\tau(H)= \frac{2m+1}{3}$, we have next equalities, which says $H(V,E)$ is a hypertree with perfect matching.

\[\tau(H)=\nu(H)=\frac{n}{3}=\frac{2m+1}{3}\]

By contradiction, let us take out a counterexample $H(V,E)$ with minimum edges. Then $\tau(H)= \frac{2m+1}{3}$ and $H(V,E)$ contains cycles. We have a series of claims:

\begin{claim}\label{cla:commonedge}
Every two distinct cycles in $H$ share common edges.
\end{claim}

Actually, for every two distinct cycles $C_{1}$ and $C_{2}$, if $E(C_{1})\cap E(C_{2})=\emptyset$,
then we can partition the set of edges $E(H)$ into two parts $E(H_{1})$ and $E(H_{2})$ such that $E(C_{1})\subseteq E(H_{1}),E(C_{2})\subseteq E(H_{2})$
and the edge-induced subhypergraphs $H_{1}$ and $H_{2}$ are both connected. Because $H(V,E)$ is a counterexample with minimum edges, we have next inequalities,
a contradiction with the assumption $\tau(H)= \frac{2m+1}{3}$.

\[\tau(H_{1})\leq\frac{2m_{1}}{3},\tau(H_{2})\leq\frac{2m_{2}}{3}\Rightarrow \tau(H)\leq \tau(H_{1})+\tau(H_{2})\leq\frac{2m_{1}}{3}+\frac{2m_{2}}{3}=\frac{2m}{3}\]

Let us take out a shortest cycle $C$. Because $\tau(C)\leq \frac{m_{c}+1}{2}<\frac{2m_{c}+1}{3}$, we know $E(H)\setminus E(C)\neq \emptyset$.
Furthermore,according to claim, we know $E(H)\setminus E(C)$ induces some hypertrees. The next claim is essential.

\begin{claim}\label{cla:singleedge}
Every hypertree induced by $E(H)\setminus E(C)$ must be an edge.
\end{claim}

We assume there exists a hypertree $T$ with $|E(T)|\geq 2$. Then let us take arbitrarily a vertex $v\in T\cap C$ and denote the farthest vertex from $v$ in $T$ as $v'$. We have next two cases.\\

Case $1$: distance $(v,v')=1$ in $T$, we have a partial structure in Figure~\ref{graphs-3}. Now we can take $\{e_{1},e_{2}\}$ as $E_{1}$ and other edges as $E_{2}$. It is easy to know the edge-induced subhypergraphs $H_{1}$ and $H_{2}$ are both connected. Thus we have next inequalities,which is contradiction with $\tau(H)= \frac{2m+1}{3}$.

\[\tau(H)\leq \tau(H_{1})+\tau(H_{2})\leq 1+\frac{2(m-2)+1}{3}=\frac{2m}{3}<\frac{2m+1}{3}\]

\begin{figure}[h]
\begin{minipage}[h]{0.5\linewidth}
\centering
\includegraphics[width=2in]{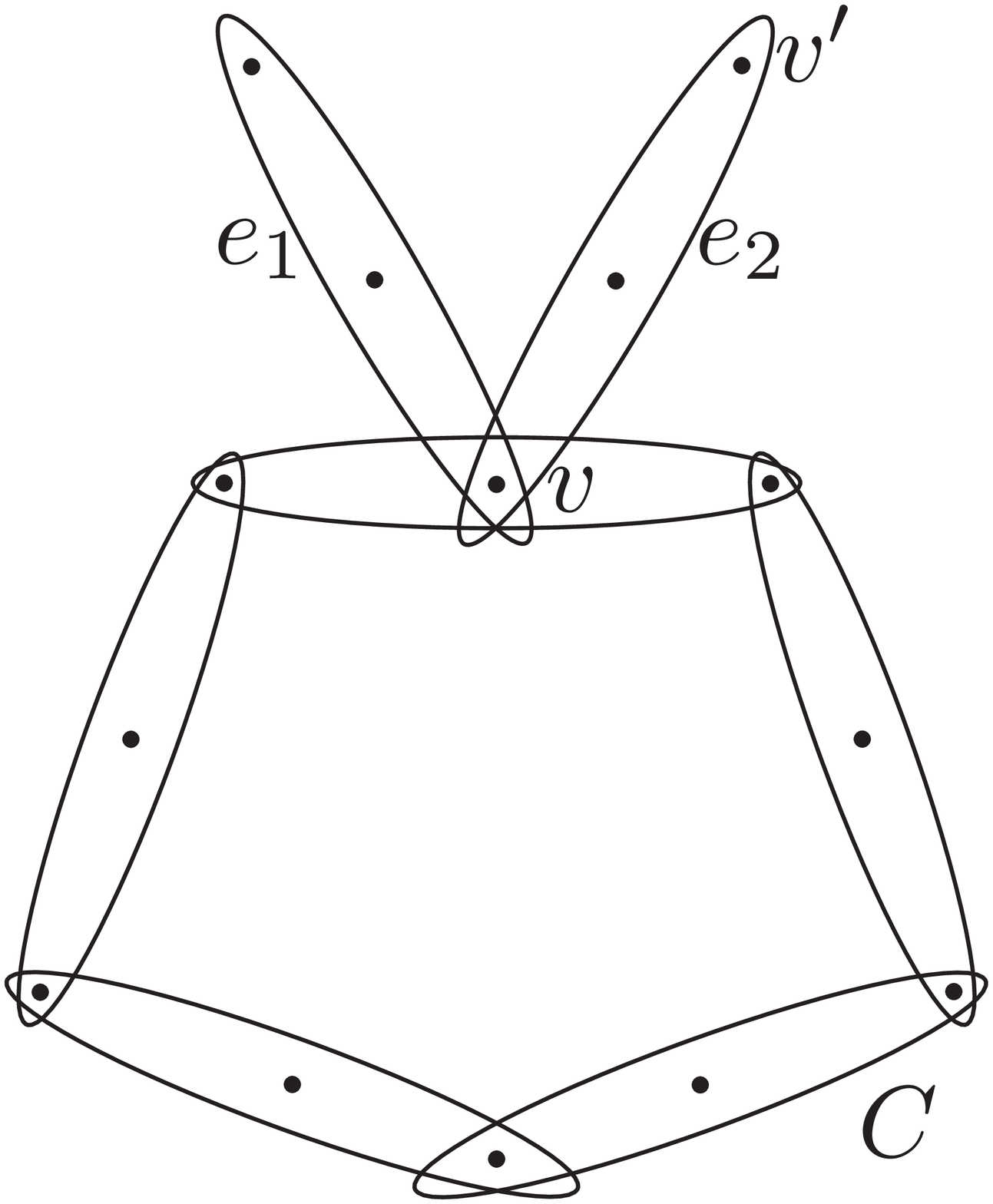}
\caption{}
\label{graphs-3}
\end{minipage}%
\begin{minipage}[h]{0.5\linewidth}
\centering
\includegraphics[width=2in]{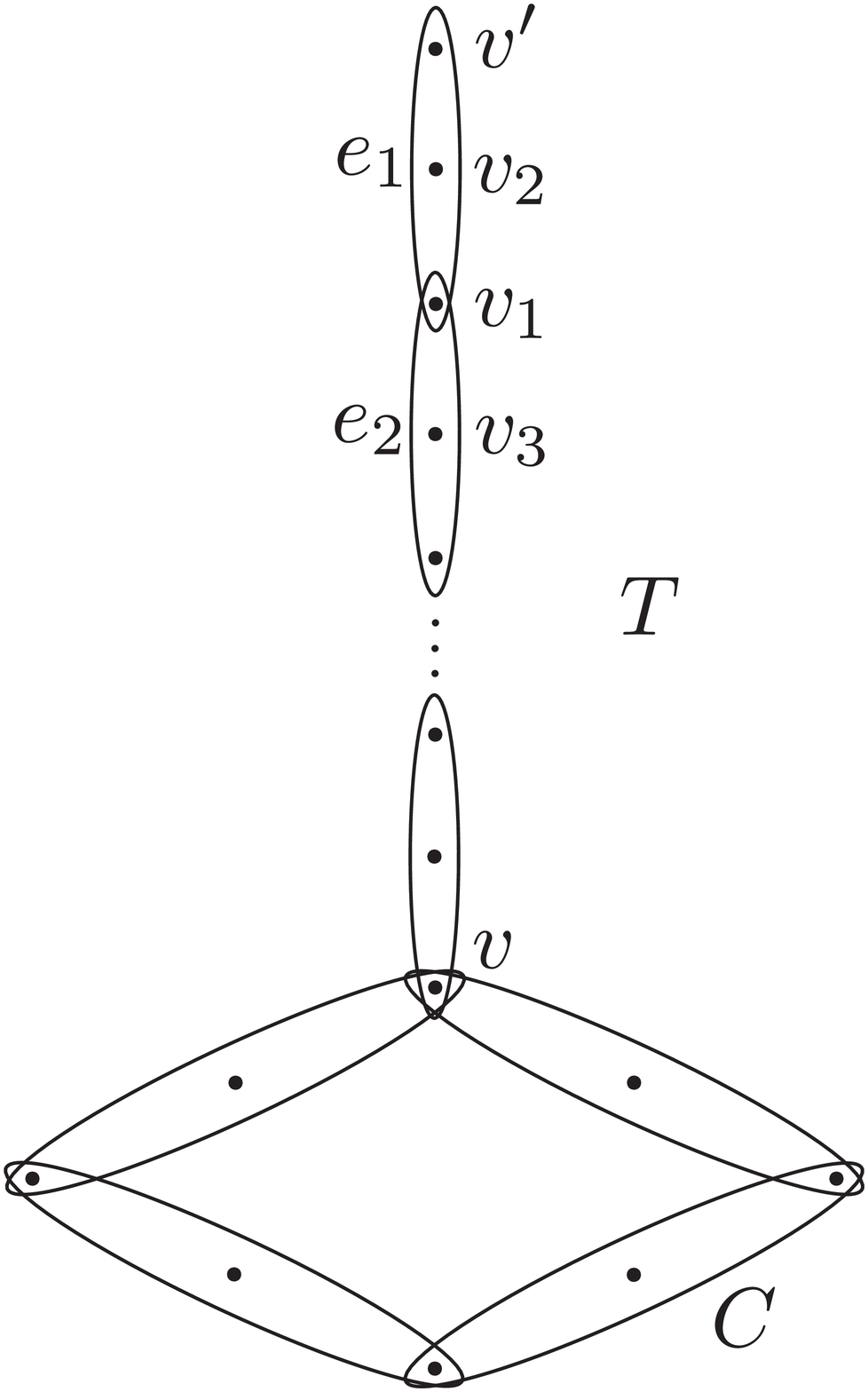}
\caption{}
\label{graphs-4}
\end{minipage}
\end{figure}

Case $2$: distance $(v,v')\geq 2$ in $T$, we have a partial structure in Figure~\ref{graphs-4}. Because $v'$ is the farthest vertex from $v$ in $T$, there must be $d(v')=d(v_{2})=1$ in $T$, which says $e_{1}$ is the unique edge containing $v'$ or $v_{2}$ in $T$.  We can take the edges incident with $v_{1}$ in $T$ as $E_{1}$ and other edges as $E_{2}$. It is easy to know $H_{1}$ is connected and $H_{2}$ has at most two components. Furthermore, $H_{1}$ contains $e_{1},e_{2}$, thus $m_{1}\geq 2$.\\

If $H_{2}$ is connected, we have next inequalities,which is contradiction with $\tau(H)= \frac{2m+1}{3}$.

\[\tau(H)\leq \tau(H_{1})+\tau(H_{2})\leq 1+\frac{2m_{2}+1}{3}=1+\frac{2(m-m_{1})+1}{3}\leq 1+\frac{2(m-2)+1}{3}=\frac{2m}{3}<\frac{2m+1}{3}\]

If $H_{2}$ has two components, denoted as $H_{3}$ and $H_{4}$, and $H_{3}$ contains the cycle $C$. Because $H(V,E)$ is a counterexample with minimum edges, we have next inequalities, also a contradiction with the assumption $\tau(H)= \frac{2m+1}{3}$.

\[\tau(H_{3})\leq\frac{2m_{3}}{3},\tau(H_{4})\leq\frac{2m_{4}+1}{3}\Rightarrow\]
\[\tau(H)\leq \tau(H_{1})+\tau(H_{3})+\tau(H_{4})\leq 1+\frac{2m_{3}}{3}+\frac{2m_{4}+1}{3}=1+\frac{2(m-m_{1})+1}{3}\leq 1+\frac{2(m-2)+1}{3}=\frac{2m}{3}\]

Above all, in whatever case, there always exists a contradiction. Thus our assumption that there exists a hypertree $T$ with $|E(T)|\geq 2$ doesn't hold on and
every hypertree induced by $E(H)\setminus E(C)$ must be an edge.\\

Finally, let us consider the set of single edges induced by $E(H)\setminus E(C)$.\\

Case $1$: there exists a single edge $e$ connected with $C$ by a non-join vertex. Then we have a partial structure in Figure~\ref{graphs-5}. Now we can take $\{e,e'\}$ as $E_{1}$ and other edges as $E_{2}$. It is easy to know the edge-induced subhypergraphs $H_{1}$ and $H_{2}$ are both connected. Thus we have next inequalities,which is contradiction with $\tau(H)= \frac{2m+1}{3}$.

\[\tau(H)\leq \tau(H_{1})+\tau(H_{2})\leq 1+\frac{2(m-2)+1}{3}=\frac{2m}{3}<\frac{2m+1}{3}\]

\begin{figure}[h]
\begin{minipage}[h]{0.5\linewidth}
\centering
\includegraphics[width=2in]{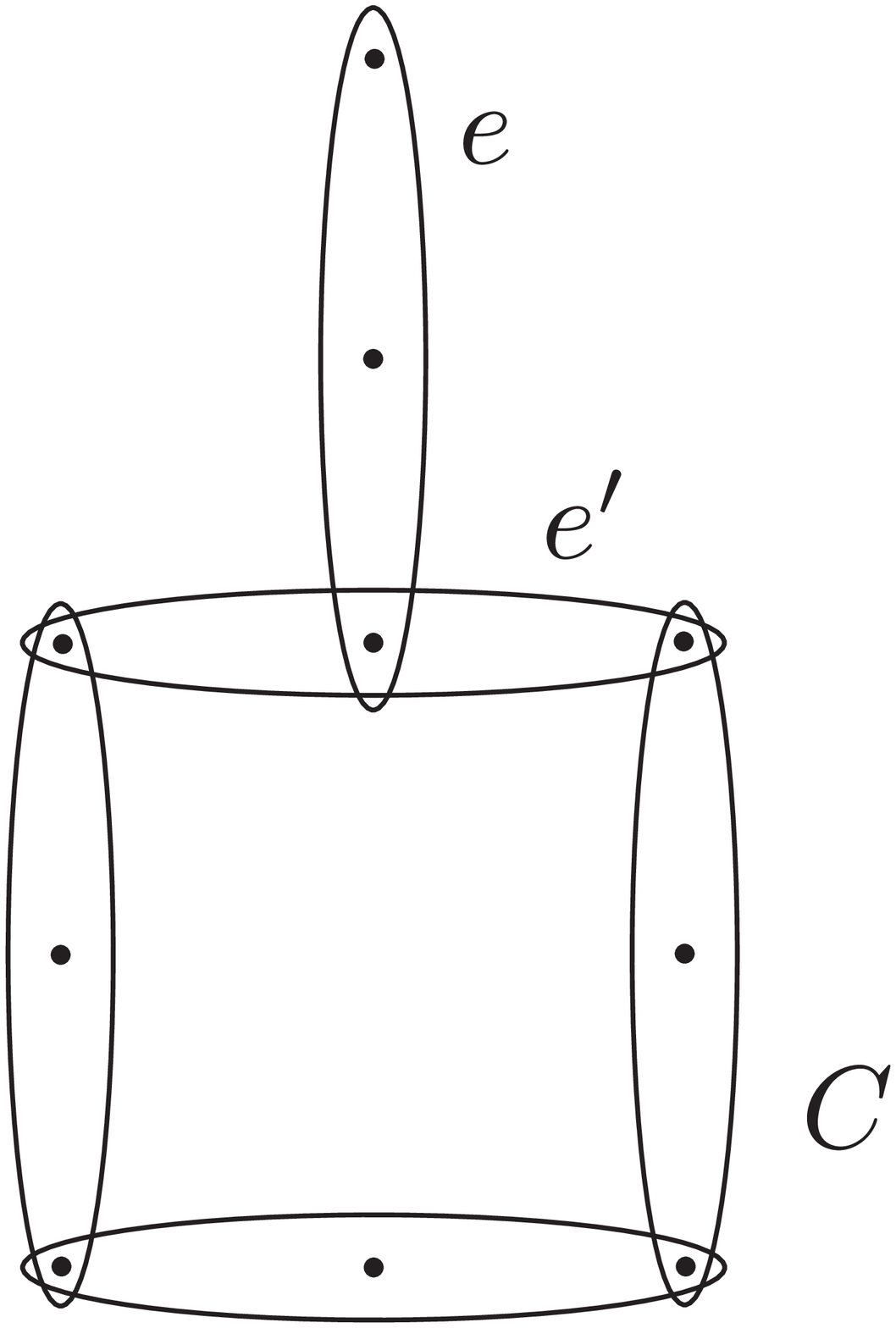}
\caption{}
\label{graphs-5}
\end{minipage}%
\begin{minipage}[h]{0.5\linewidth}
\centering
\includegraphics[width=2in]{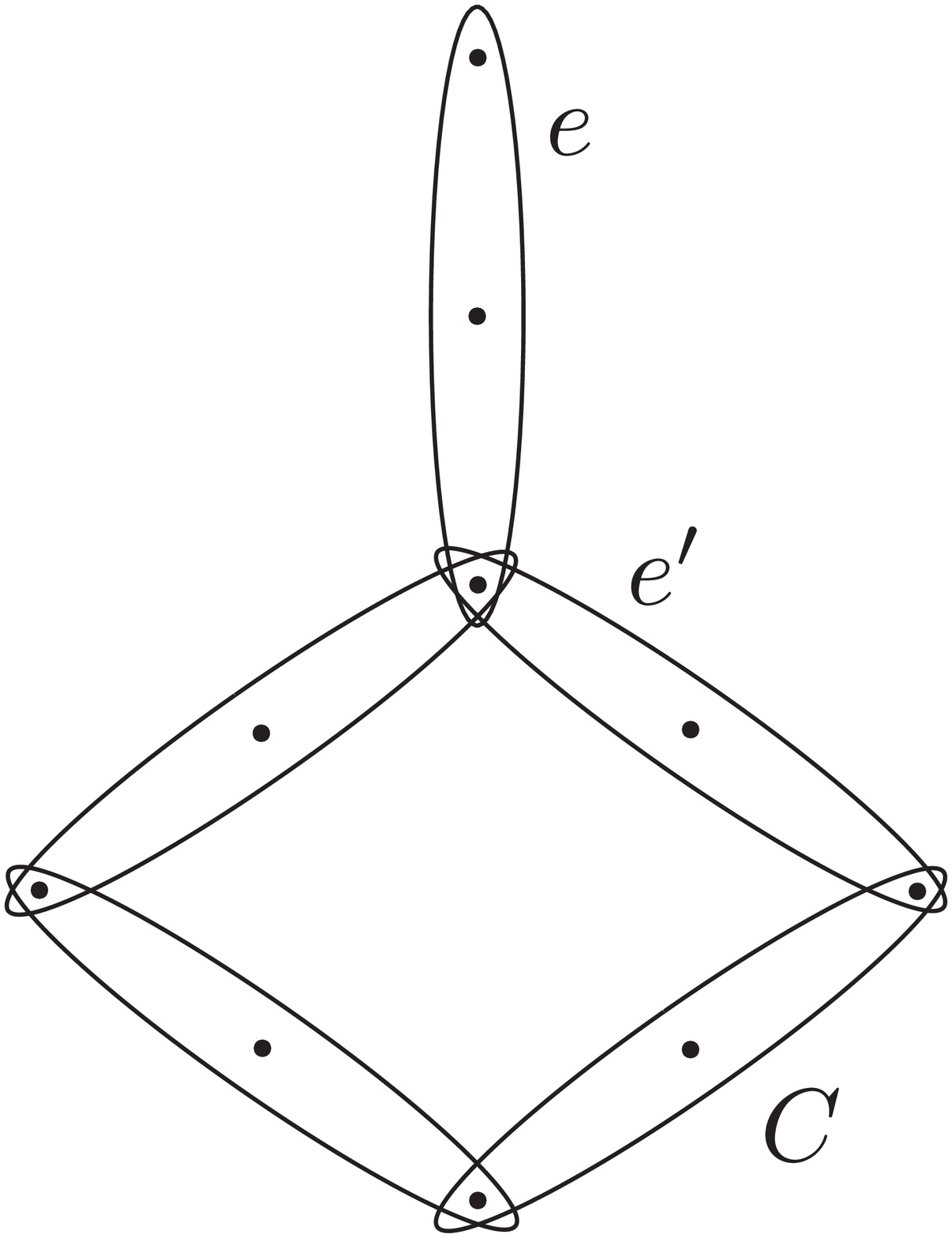}
\caption{}
\label{graphs-6}
\end{minipage}
\end{figure}

Case $2$: Every single edge $e$ is connected with $C$ by join vertices. This means every non-join vertex is not connected with the set of single edges induced by $E(H)\setminus E(C)$. Then we have a partial structure in Figure~\ref{graphs-6}. Now we can take $\{e,e'\}$ as $E_{1}$ and other edges as $E_{2}$. Because every non-join vertex is not connected with the set of single edges induced by $E(H)\setminus E(C)$. It is easy to know the edge-induced subhypergraphs $H_{1}$ and $H_{2}$ are both connected. Thus we have next inequalities,which is contradiction with $\tau(H)= \frac{2m+1}{3}$.

\[\tau(H)\leq \tau(H_{1})+\tau(H_{2})\leq 1+\frac{2(m-2)+1}{3}=\frac{2m}{3}<\frac{2m+1}{3}\]

Above all, in whatever case, there always exists a contradiction. Thus our initial assumption that $H(V,E)$ contains cycles doesn't hold on. Thus $H(V,E)$ is a hypertree with perfect matching.

\end{proof}

\section{Conclusion and future work}

In this paper, we prove that for every $3-$uniform connected hypergraph $H(V,E)$, $\tau(H)\leq \frac{2m+1}{3}$ holds on where $\tau(H)$ is the vertex cover number and $m$ is the number of edges. Furthermore, the equality holds on if and only if $H(V,E)$ is a hypertree with perfect matching. We also prove some lemmas about $3-$uniform hypergraph. These lemmas may be useful in solving some other problems. \\

In future, we can consider the vertex cover number of $k-$uniform hypergraph where $k\geq 4$. As $k$ is larger, structure analysis is more difficult because the possibilities are more and more. To solve this problem, it may be a good way to start with $k-$uniform and linear hypergraph. The linear restriction on $k-$uniform hypergraph can decrease many possibilities. If we can get some nontrivial bounds of the vertex cover number for $k-$uniform and linear hypergraph, we can generalize the bounds to all $k-$uniform hypergraphs.

\bibliography{ref}

\begin{thebibliography}{1}

\bibitem{AK1999}
N.N.Kuzjurin A.S.Asratian.
\newblock On the number of nearly perfect matchings in almost regular uniform
  hypergraphs.
\newblock {\em Discrete Mathematics}, 207(1-3):1--8, 1999.

\bibitem{berge1989}
Claude Berge.
\newblock {\em Hypergraphs: Combinatorics of Finite Sets}.
\newblock Elsevier, 1989.

\bibitem{DK2016}
K.N.Kalariya D.K.Thakkar.
\newblock Vertex covering and stability in hypergraphs.
\newblock {\em International Journal of Computational and Applied Mathematics},
  11(1):61--69, 2016.

\bibitem{DSW1994}
PAUL~SEYMOUR GUO-LI~DING and PETER WINKLER.
\newblock Bounding the vertex cover number of a hypergraph.
\newblock {\em Combinatorica}, 14(1):23--34, 1994.

\bibitem{AKS1997}
Jeong-Han~Kim Noga~Alon and Joel Spencer.
\newblock Nearly perfect matchings in regular simple hypergraphs.
\newblock {\em Israel Journal of Mathematics}, 100(1):171--187, 1997.

\bibitem{OM2005}
Michael Okun.
\newblock On approximation of the vertex cover problem in hypergraphs.
\newblock {\em Discrete Optimization}, 2(1):101--111, 2005.

\end{thebibliography}

\end{document}